\documentclass[12pt]{article}

\usepackage{amssymb}
\usepackage{amsmath}
\usepackage{amsthm}
\usepackage{mathrsfs}
\usepackage{mathtools,xcolor,enumerate}
\usepackage{tikz}
\usetikzlibrary{arrows.meta}
\usepackage{float}
\usepackage{longtable}
\usepackage[colorlinks=true,allcolors=black]{hyperref}
\usepackage[expansion=false]{microtype}

\newtheorem{theorem}{Theorem}
\newtheorem{proposition}{Proposition}[section]
\newtheorem{lemma}[proposition]{Lemma}
\newtheorem{corollary}{Corollary}
\theoremstyle{definition}
\newtheorem{definition}[proposition]{Definition}
\theoremstyle{remark}
\newtheorem{remark}[proposition]{Remark}

\newcommand{\R}{\mathbb{R}}
\newcommand{\Met}{\operatorname{Met}}
\newcommand{\Vol}{\operatorname{Vol}}
\newcommand{\diam}{\operatorname{diam}}
\newcommand{\dist}{\operatorname{dist}}
\newcommand{\tr}{\operatorname{tr}}
\newcommand{\Id}{\operatorname{Id}}

\newcommand{\argmin}{\operatorname*{argmin}}
\newcommand{\argmax}{\operatorname*{argmax}}

\begin{document}

\title{Generic Uniqueness of Isoperimetric Regions in Arbitrary Dimension}
\author{Gongping Niu}
\date{}
\maketitle

\begin{abstract}
Let $M^{n+1}$ be a closed, connected smooth manifold, where $n\geq1$.
For each prescribed volume fraction
$s\in(0,1)\setminus\{\frac12\}$, we prove that the isoperimetric region
of volume $s\Vol_g(M)$ is unique for a generic set of smooth Riemannian
metrics $g$.  At half volume, a generic metric has exactly two minimizers,
a region $E$ and its complement $E^c$.  As consequences, for every
$m>0$, uniqueness holds for a generic set of metrics $g$ satisfying
$m<\Vol_g(M)$, and it also holds for a generic set of pairs $(g,m)$ with
$0<m<\Vol_g(M)$.

The proof is variational and requires neither boundary regularity nor nondegeneracy
of the constrained Jacobi operator. In particular, the results apply even when isoperimetric boundaries are singular.

\end{abstract}

\section{Introduction}
\label{sec:introduction}

Consider the isoperimetric problem on a closed connected smooth manifold
$M^{n+1}$. Given a smooth Riemannian metric $g$ on $M$ and a number $m$
satisfying $0<m<\Vol_g(M)$, one seeks a finite-perimeter set of $g$-volume
$m$ whose $g$-perimeter is least among all sets with the same volume.
The direct method guarantees the existence of a minimizer, but its
boundary need not be smooth. If $2\leq n+1\leq7$, every isoperimetric
boundary is a smooth embedded hypersurface of constant mean curvature.
In higher dimensions, the boundary is smooth away from a closed singular
set of Hausdorff dimension at most $n-7$ (see
\cite{Almgren1976,GonzalezMassariTamanini1983,Gruter1987,morgan2003regularity}
and the surveys in \cite{ritore2023isoperimetric,maggi2012sets}).
These regularity theorems are optimal; singularities may occur: in my previous work \cite{niu2024existence}, I constructed examples
in dimension eight, and Marshall-Stevens and I
\cite{marshall2026isoperimetry} constructed isoperimetric boundaries with
a wide variety of singular sets for suitable metrics on every closed  manifold of dimension at least eight.

These singularities complicate the deformation theory. Standard
Sard--Smale arguments for generic nondegeneracy rely on local
parametrizations by smooth normal graphs
\cite{white1991space,chodosh2023riemannian}, which do not directly describe
perturbations across singular points. This leads to three natural
questions: whether perturbing the ambient metric generically makes
isoperimetric boundaries smooth, makes their constrained (twisted) Jacobi
operators nondegenerate, or makes the minimizing regions unique.
Throughout this paper, \emph{generic} means \emph{residual}: it is a
property that holds on a set containing a countable intersection of open
dense subsets of the relevant space. All parameter spaces
considered below are Baire spaces. Thus a property is generic if and
only if it holds on a dense $G_\delta$ subset.

\medskip 

 Chodosh, Engelstein, and Spolaor
\cite{chodosh2023riemannian} established a generic Riemannian quantitative
isoperimetric inequality in ambient dimensions two through seven, they also proved that, for a generic metric–volume pair, every smooth isoperimetric region is strictly stable. In ambient dimension eight, the singular set may consist of finitely many
isolated points. In joint work with Marshall-Stevens and Parise
\cite{marshallstevens2025generic}, we proved that every isoperimetric region associated with a generic metric--volume pair has smooth nondegenerate
boundary (with an analogous result at each fixed enclosed volume).
A key analytic input is showing that the Jacobi form domain (denoted by $\mathcal{B}$ in \cite{marshallstevens2025generic}) is compactly embedded
into $L^2$, which is obtained from nonconcentration estimates near isolated
singularities (see also \cite{li2025minimal}). Extending this compactness argument to isoperimetric boundaries
with general singular sets requires
further analysis. Hence, generic regularity and nondegeneracy for
isoperimetric regions in higher dimensions remain open. Moreover, we did not prove generic uniqueness in \cite{marshallstevens2025generic}, even in dimension eight.

Generic uniqueness has been studied extensively for Plateau problems. 
Morgan \cite{morgan1978almost} proved that almost every  $C^{2,\alpha}$ Jordan curve in $\R^3$ (with respect to a suitable measure) bounds a unique area-minimizing surface. Morgan
\cite{morgan1981generic,morgan1982measures} later extended this approach to
hypersurfaces minimizing constant-coefficient elliptic integrands and to higher-dimensional submanifold data. More recently, Caldini, Marchese, Merlo, and Steinbr\"uchel
\cite{caldini2024generic} proved generic uniqueness and multiplicity one
for the Plateau problem in arbitrary dimension and codimension.
In these results, the prescribed boundary varies, while the ambient
geometry and the functional remain fixed.

On the other hand, ambient metric perturbations have also been used to select minimizing
currents. Smale
\cite{smale1993generic}
showed that, given a metric $g_0$ and a $g_0$-area-minimizing hypercurrent
$T_0$ in a fixed nonzero integral homology class, an arbitrarily small
$C^k$ perturbation of $g_0$ makes $T_0$ the unique minimizer in that class. His construction uses local conformal perturbations near regular
points of the current. The admissible class remains fixed because
the homology constraint is independent of the metric.

In this paper, we prove generic uniqueness of isoperimetric regions
without an upper bound on the ambient dimension. Unlike prescribed-boundary and fixed-homology constraints, the
volume constraint depends on the ambient metric. Thus a metric
perturbation generally changes both the perimeter functional
and the admissible class. We use classical metric deformations (from \cite{ebin1970manifold}) that preserve the volume measure pointwise, keeping
this admissible class fixed while varying perimeter. The argument works
directly with finite-perimeter sets, hence requires neither a parametrization
of nearby minimizing boundaries by smooth normal graphs nor
nondegeneracy of a singular Jacobi operator.

Uniqueness is useful when a variational construction produces a minimizer
only after passing to a subsequence: it identifies the limit and can
promote subsequential convergence to convergence of the entire family.
One example is the Allen--Cahn approximation of the isoperimetric problem.
The local-minimizer principle of Kohn and Sternberg
\cite{KohnSternberg1989} (also see \cite{Le2015}) recovers isolated $L^1$-local minimizers of perimeter
as $L^1$ limits of local minimizers of diffuse-interface energies.
Thus the generic isoperimetric regions covered by our results can be realized as phase-field limits without any dimension restriction or
smoothness assumption on their boundaries.

\begin{definition}
\label{def:isoperimetric-data}
We identify measurable sets if their symmetric difference has measure zero with respect to one (and hence every) smooth
Riemannian volume measure on $M$.
Write $\Met^\infty(M)$ for the space of smooth Riemannian metrics, equipped
with its usual smooth Fr\'echet topology.  For a
fixed number $m>0$, set
$\mathscr M_m^\infty(M):=\{g\in\Met^\infty(M):m<\Vol_g(M)\}$.
For $g\in\mathscr M_m^\infty(M)$, define the isoperimetric profile by
\begin{equation}
   I_g(m)
   :=
   \inf\left\{
      P_g(E): E\subset M\text{ is a finite-perimeter set},
      \quad \Vol_g(E)=m
   \right\}
   \label{eq:isoperimetric-profile}
\end{equation}
and denote by
\begin{equation}
   \mathcal I(g,m)
   :=
   \left\{
      E\subset M:
      \Vol_g(E)=m,\quad P_g(E)=I_g(m)
   \right\}
   \label{eq:minimizer-family}
\end{equation}
the family of volume-$m$ isoperimetric regions.  For simultaneous variation
of the metric and the volume, set
$\mathscr P^\infty(M):=\{(g,m)\in\Met^\infty(M)\times\R:
0<m<\Vol_g(M)\}$.

\end{definition}

Our main theorem prescribes the volume as a fixed fraction of the total
volume.  At half volume, uniqueness can hold only up to complementation:
if $E$ minimizes perimeter subject to $\Vol_g(E)=\Vol_g(M)/2$, then so
does $E^c$.

\begin{theorem}
\label{thm:normalized-volume}
Let $M^{n+1}$ be a closed connected smooth manifold, with $n\geq1$, and fix
$s\in(0,1)$.  There is a generic subset
$\mathscr R_s\subset\Met^\infty(M)$ with the following properties:

\begin{enumerate}[(i)]
\item If $s\neq\tfrac12$, then
      $\#\mathcal I(g,s\Vol_g(M))=1$ for every $g\in\mathscr R_s$.

\item If $s=\tfrac12$, then for every $g\in\mathscr R_{1/2}$ the
      isoperimetric regions of volume $\Vol_g(M)/2$ are exactly $E$ and
      $E^c$ for some isoperimetric region $E$.
\end{enumerate}
\end{theorem}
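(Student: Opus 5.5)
We will write $\mathscr R_s$ as a countable intersection of open dense sets $\mathscr U_k$. Each $\mathscr U_k$ will say that all isoperimetric regions lie within $L^1$-distance $1/k$ of each other, or, when $s=\tfrac12$, of one region or its complement. Fix $s$ and write $m(g)=s\Vol_g(M)$. For $g\in\Met^\infty(M)$ set
\[
\delta(g):=\sup\{\min(|E\triangle F|,\,|E\triangle F^c|_{s=1/2}) : E,F\in\mathcal I(g,m(g))\},
\]
where the second term is present only when $s=\tfrac12$ and $|\cdot|$ denotes volume with respect to a fixed background measure. Let $\mathscr U_k=\{g:\delta(g)<1/k\}$. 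Then $\bigcap_k\mathscr U_k$ gives (i) and (ii).

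\emph{Openness.} We argue by compactness. Suppose $g_j\to g$ with $g\in\mathscr U_k$, and suppose $E_j,F_j\in\mathcal I(g_j,m(g_j))$ are $1/k$-apart. Perimeters are uniformly bounded, so BV compactness gives $L^1$ limits $E,F$. Upper semicontinuity of the profile, obtained by comparing with small volume adjustments of a fixed $g$-minimizer, together with lower semicontinuity of perimeter, shows that $E,F\in\mathcal I(g,m(g))$. These are still $1/k$-apart, which is a contradiction. This is standard; the only point needing care is the volume-fixing step, which uses a smooth local deformation near a regular boundary point.

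\emph{Density.} Given $g_0$, we perturb it in two steps.

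First, we make a minimizer $E_0\in\mathcal I(g_0,m(g_0))$ strictly preferred by shrinking perimeter near $\partial E_0$ without changing the volume form. Following Ebin, we choose a symmetric $2$-tensor $h$ with $\tr_{g_0}h=0$, so that the metrics $g_t=g_0+th$, suitably normalized by $\det$, have $dV_{g_t}=dV_{g_0}$ pointwise. The admissible class $\{E:\Vol_{g_t}(E)=m\}$ is then unchanged. We take $h$ supported in a small ball $B$ around a regular point $p$ of $\partial^*E_0$, with $h(\nu,\nu)>0$ there, where $\nu$ is the unit normal. Then $P_{g_t}(E)=\int_{\partial^*E}\sqrt{\det(\cdot)}$ for a tangential metric whose density is $\bigl(1-\tfrac t2 h(\nu_E,\nu_E)+O(t^2)\bigr)$ times the $g_0$-density, since trace zero converts a normal increase into a tangential decrease. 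Hence $P_{g_t}\le P_{g_0}$ for every set, with strict decrease for $E_0$ of order $t\,\mathcal H^n(\partial E_0\cap B)$.

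Second, we must show that every other minimizer $F$ of $g_t$ is $L^1$-close to $E_0$ (or to $E_0^c$ when $s=\tfrac12$). Any $g_t$-minimizer $F$ is a $g_0$-almost-minimizer with defect $O(t)$. The gain structure forces the defect inequality
\[
\int_{\partial^*F\cap B}h(\nu_F,\nu_F)\;\ge\;\int_{\partial^*E_0\cap B}h(\nu_{E_0},\nu_{E_0})-O(t).
\]
To exploit this we replace the single bump by a family $h_\varepsilon$ localized to an $\varepsilon$-neighborhood of $\partial E_0$ near $p$, with $h$ peaked on the normal directions of $E_0$. Letting $t\to0$ along minimizers $F_t$, any subsequential limit $F_*$ is a $g_0$-minimizer that saturates the inequality. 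Saturation forces $\partial F_*$ to coincide with $\partial E_0$ on an open piece of $B$. Since both are area-minimizing with constant mean curvature, unique continuation on the regular part then gives $F_*=E_0$ or $F_*=E_0^c$. The complement case can only satisfy the volume constraint when $s=\tfrac12$; otherwise it gives a distinct sign of volume in $B$ and is excluded. Thus for small $t$ we get $\delta(g_t)<1/k$, and $g_t\to g_0$.

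The main obstacle is this last rigidity step. Coincidence of reduced boundaries on an open set and unique continuation must hold even when the boundaries are singular. We will handle it using only that regular parts are connected analytic CMC hypersurfaces away from a codimension-$\ge 7$ singular set, whose complement in a connected component is connected. The more delicate issue is making the perturbation strong enough to force saturation in the limit. If this fails, the fallback is a Smale-type argument: perturb conformally along the trace-free direction at countably many regular points and use a dimension count on the finitely many "clusters" of minimizers.

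Finally, we must choose the scale $k$ before $t$. Since $\mathcal I(g_0,m)$ is compact in $L^1$, we cover it by finitely many $1/(2k)$-balls and apply the first step to a representative $E_0$ from one ball. Openness then propagates the conclusion.
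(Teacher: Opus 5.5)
Your openness argument is the paper's (Proposition~\ref{prop:joint-compactness}) and is fine. The gap is in the density step.

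\textbf{What the limit actually gives.} Pass to the limit in your defect inequality, comparing against every $F\in\mathcal I(g_0,m)$ and not only $E_0$. This shows only that each subsequential limit $F_*$ of $g_t$-minimizers maximizes the linear functional $F\mapsto\int_{\partial^*F}h(\nu_F,\nu_F)\,d\mu_F$ over $\mathcal I(g_0,m)$; this is Proposition~\ref{prop:first-order-selection}. Nothing produces ``saturation''. First, $E_0$ need not be a maximizer at all. Second, even equality of two such integrals is a single scalar identity, and it cannot force $\partial F_*$ to agree with $\partial E_0$ on an open set.

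\textbf{A counterexample to the single-bump step.} Take a round $S^{n+1}$ with $m\neq\frac12\Vol_{g_0}(M)$. Let $\Sigma$ be the great sphere tangent to $\partial E_0$ at $p$, $\sigma$ the reflection across it, and $d_\Sigma$ the signed distance to $\Sigma$. The bump $h=\phi\,(\nu^\flat\otimes\nu^\flat-g_0/(n+1))$, with $\phi$ radial about $p$ and $\nu=\nabla d_\Sigma$, is trace-free, positive on $\nu_{E_0}$ near $p$, and $\sigma$-invariant. Hence the maximizing set is $\sigma$-invariant. It contains $E_0$ only together with $\sigma(E_0)$, and $\sigma(E_0)\notin\{E_0,E_0^c\}$. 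Either way, the conclusion $F_*\in\{E_0,E_0^c\}$ fails.

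\textbf{Why the other devices do not close the gap.}
\begin{enumerate}[(i)]
\item The tube-localized $h_\varepsilon$ does not fix this at any fixed $\varepsilon$.
\item Covering $\mathcal I(g_0,m)$ by $1/(2k)$-balls does not help, since you still have to show that all selected minimizers land in one ball.
\item A repair in your spirit would need at least: $\varepsilon\to0$ after $t\to0$, with an upper-semicontinuity argument on shrinking tubes; bumps near every component of $\partial E_0$; and connectedness of the regular part of each component. Unique continuation only propagates along one component, and it must be of Aronszajn type, since $g_0$ is smooth, not analytic. None of this is supplied, and you leave the step open yourself.
\item The fallback fails too. Conformal perturbations change $dV_g$ and hence the constraint, and nothing makes $\mathcal I(g_0,m)$ split into finitely many clusters.
\item A minor error: $P_{g_t}\le P_{g_0}$ cannot hold for all sets, because a nonzero trace-free $h$ is negative in some directions.
\end{enumerate}

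\textbf{The missing idea.} Choose the perturbation direction generically instead of building it around a prescribed $E_0$. The paper takes $\Phi(A)=\max_{E\in\mathcal I(g,m)}\langle\mathsf T_E,A\rangle$ on $\mathscr X_g^\infty$. This function is convex and Lipschitz in $\|\cdot\|_{0,g}$, so by Sharp's theorem (Lemma~\ref{lem:sharp-differentiability}) it is G\^ateaux differentiable on a residual set. At such an $A$, every maximizer $E$ satisfies $\langle\mathsf T_E,\cdot\rangle=D\Phi(A)$, so all maximizers share one trace-free stress. Then $\mathsf T_E=\mathsf T_F$ gives $\mu_E=\mu_F$, hence $P_g(E\mathbin\triangle F)=0$, hence $F\in\{E,E^c\}$ (Proposition~\ref{prop:stress-rigidity}). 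So the argmax set is a single region or a complementary pair, and the first-order selection you already have finishes density. This route needs no boundary regularity, no unique continuation, and no control over which minimizer is selected — exactly what your construction was trying to force by hand.
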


Constant rescaling preserves isoperimetric regions while changing their
volume only. Combining this observation with the compactness
result (Proposition~\ref{prop:joint-compactness}), we obtain the fixed-volume and metric--volume versions of the theorem.

\begin{theorem}
\label{thm:generic-uniqueness}
Let $M^{n+1}$ be a closed connected smooth manifold, with $n\geq1$.

\begin{enumerate}[(i)]
\item For every $m>0$, there exists a generic subset
      $\mathscr R_m\subset\mathscr M_m^\infty(M)$ such that
      $\#\mathcal I(g,m)=1$ for every $g\in\mathscr R_m$.

\item There exists a generic subset
      $\mathscr R\subset\mathscr P^\infty(M)$ such that
      $\#\mathcal I(g,m)=1$ for every $(g,m)\in\mathscr R$.
\end{enumerate}
\end{theorem}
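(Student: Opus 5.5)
Theorem~\ref{thm:generic-uniqueness} will be deduced from Theorem~\ref{thm:normalized-volume} by scaling, together with the compactness result (Proposition~\ref{prop:joint-compactness}). The key observation is that for $\lambda>0$ the metric $\lambda^2 g$ has $\Vol_{\lambda^2 g}(E)=\lambda^{n+1}\Vol_g(E)$ and $P_{\lambda^2 g}(E)=\lambda^n P_g(E)$. Hence
\[
\mathcal I(\lambda^2 g,\lambda^{n+1}m)=\mathcal I(g,m),
\]
and the normalized fraction $s=m/\Vol_g(M)$ is scale invariant. So for fixed $m$, uniqueness of $\mathcal I(g,m)$ depends only on $g$ together with the fraction $s(g)=m/\Vol_g(M)\in(0,1)$. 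The difficulty is that $s(g)$ varies with $g$, while Theorem~\ref{thm:normalized-volume} fixes $s$. Intersecting countably many $\mathscr R_s$ over rational $s$ is not enough, since $s(g)$ is typically irrational. I will therefore reprove genericity using the structure behind Theorem~\ref{thm:normalized-volume}, namely openness plus density, rather than the bare statement.

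\textbf{Openness via compactness.} For part (ii), I define, for $k\in\mathbb N$,
\[
\mathscr U_k:=\Bigl\{(g,m)\in\mathscr P^\infty(M): \text{any } E,F\in\mathcal I(g,m) \text{ satisfy } \Vol_g(E\triangle F)<1/k\Bigr\}.
\]
The set of pairs with $\#\mathcal I(g,m)=1$ is then $\bigcap_k\mathscr U_k$, so it suffices to show each $\mathscr U_k$ is open and dense. Openness follows from Proposition~\ref{prop:joint-compactness}. Suppose $(g_j,m_j)\to(g,m)$ with $(g_j,m_j)\notin\mathscr U_k$, witnessed by $E_j,F_j$ with $\Vol_{g_j}(E_j\triangle F_j)\ge 1/k$. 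Then subsequences converge in $L^1$ to isoperimetric regions $E,F\in\mathcal I(g,m)$. Volumes of symmetric differences pass to the limit, so $\Vol_g(E\triangle F)\geq 1/k$ and $(g,m)\notin\mathscr U_k$. Thus the complement of $\mathscr U_k$ is closed.

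\textbf{Density.} Given $(g_0,m_0)$, set $s_0=m_0/\Vol_{g_0}(M)$. If $s_0\neq\tfrac12$, Theorem~\ref{thm:normalized-volume} gives $g\in\mathscr R_{s_0}$ arbitrarily close to $g_0$. If $s_0=\tfrac12$, I first perturb $m_0$ slightly to get $s\neq\tfrac12$ and then apply the theorem with that $s$. Setting $m=s\Vol_g(M)$, the pair $(g,m)$ is close to $(g_0,m_0)$, has a unique minimizer, and so lies in every $\mathscr U_k$. This proves (ii).

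For (i), $m$ is fixed, so I cannot perturb $m$; I adjust the metric by a constant instead. Given $g_0$ with $s_0=m/\Vol_{g_0}(M)$, I choose $s$ near $s_0$ with $s\neq\tfrac12$ and $g\in\mathscr R_s$ near $g_0$. I then replace $g$ by $\lambda^2 g$ with $\lambda^{n+1}=m/(s\Vol_g(M))$, which is close to $1$. By the scaling identity, $\mathcal I(\lambda^2 g,m)\cong\mathcal I(g,s\Vol_g(M))$ is a singleton, and $\lambda^2 g$ is close to $g_0$. The sets $\mathscr U_k^m$ are defined analogously inside $\mathscr M_m^\infty(M)$ and are open by the same compactness argument with $m$ fixed. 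Since $\mathscr M_m^\infty(M)$ and $\mathscr P^\infty(M)$ are open subsets of Fréchet spaces, they are Baire, and the residual conclusion follows.

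\textbf{Main obstacle.} I expect the openness step to be the crux: it requires that Proposition~\ref{prop:joint-compactness} gives $L^1$ compactness of minimizers under joint convergence of metric and volume, with limits again minimizers. This in turn relies on continuity of $I_g(m)$ in $(g,m)$. Also, the paper's $\mathscr R_s$ may not be explicitly open-dense, but density alone is all I use from Theorem~\ref{thm:normalized-volume}.
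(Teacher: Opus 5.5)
Your proposal is correct and follows essentially the paper's route: you get density from Theorem~\ref{thm:normalized-volume} by constant rescaling, choosing a nearby fraction $s\neq\tfrac12$ to handle half volume. You get openness of the small-diameter sets from Proposition~\ref{prop:joint-compactness}, then conclude by Baire. The differences are cosmetic: you measure symmetric differences with $\Vol_g$ rather than the fixed reference distance $d_0$, which is harmless since the volume densities converge uniformly, and you derive density in (ii) directly from Theorem~\ref{thm:normalized-volume} by varying $m$, whereas the paper deduces it from part (i).
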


\begin{remark}
\begin{enumerate}[(i)]
\item Neither Theorem~\ref{thm:normalized-volume} nor Theorem~\ref{thm:generic-uniqueness} assumes global smoothness of
      isoperimetric boundaries.  Both apply in every ambient dimension,
      including when the minimizing boundaries are singular.
\item Uniqueness alone does not imply that the minimizing boundary is
      smooth or strictly stable.  Conversely, strict stability is a local
      second-variation property and does not exclude distinct global
      minimizers.
    \item For a fixed volume $m$, the condition $2m=\Vol_g(M)$ can be
removed by an arbitrarily small constant rescaling of the metric.
Consequently, the generic sets in
Theorem~\ref{thm:generic-uniqueness} can exclude the half-volume
case. 
\end{enumerate}
\end{remark}

Taking a countable intersection gives simultaneous uniqueness at rational
volume fractions.

\begin{corollary}
\label{cor:rational-volume-fractions}
There is a generic subset
$\mathscr R_{\mathbb Q}\subset\Met^\infty(M)$ such that every
$g\in\mathscr R_{\mathbb Q}$ satisfies

\[
   \#\mathcal I\bigl(g,s\Vol_g(M)\bigr)=1
   \qquad
   \text{for every }
   s\in\mathbb Q\cap(0,1)\setminus\left\{\frac12\right\},
\]
while its half-volume minimizers are exactly $E$ and $E^c$ for some
isoperimetric region $E$.
\end{corollary}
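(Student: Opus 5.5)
The corollary follows from Theorem~\ref{thm:normalized-volume} by a countable intersection and the Baire category theorem. For each $s\in\mathbb Q\cap(0,1)\setminus\{\tfrac12\}$, Theorem~\ref{thm:normalized-volume}(i) gives a generic set $\mathscr R_s\subset\Met^\infty(M)$ on which $\#\mathcal I(g,s\Vol_g(M))=1$. For $s=\tfrac12$, Theorem~\ref{thm:normalized-volume}(ii) gives a generic set $\mathscr R_{1/2}$ on which the half-volume minimizers are exactly $E$ and $E^c$. We then set
\[
   \mathscr R_{\mathbb Q}:=\bigcap_{s\in\mathbb Q\cap(0,1)}\mathscr R_s .
\]

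The key step is to check that $\mathscr R_{\mathbb Q}$ is generic. By definition, each $\mathscr R_s$ contains a countable intersection $\bigcap_{k}U_{s,k}$ of open dense subsets of $\Met^\infty(M)$. Since $\mathbb Q\cap(0,1)$ is countable, $\mathscr R_{\mathbb Q}$ contains the countable intersection $\bigcap_{s,k}U_{s,k}$ of open dense sets, so it is residual. Because $\Met^\infty(M)$ is an open subset of the Fr\'echet space of smooth symmetric $2$-tensors, it is a Baire space, as noted in the introduction. Hence this intersection is a dense $G_\delta$, and in particular $\mathscr R_{\mathbb Q}$ is nonempty and generic.

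Finally, a metric $g\in\mathscr R_{\mathbb Q}$ lies in every $\mathscr R_s$. So, by (i), for each rational $s\neq\tfrac12$ it has exactly one isoperimetric region of volume $s\Vol_g(M)$, and by (ii) its half-volume minimizers are exactly some $E$ and $E^c$. There is no real obstacle here: the substantive work lies entirely in Theorem~\ref{thm:normalized-volume}. The only point needing care is that all the volume fractions are taken relative to the same metric $g$. Each $\mathscr R_s$ lives in the same space $\Met^\infty(M)$, not in a volume-dependent space such as $\mathscr M_m^\infty(M)$, so the intersection makes sense without any compatibility argument.
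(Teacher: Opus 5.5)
Your proposal is correct and is essentially the paper's argument: the paper derives the corollary by intersecting the generic sets $\mathscr R_s$ from Theorem~\ref{thm:normalized-volume} over the countably many rational $s\in(0,1)$, including $s=\tfrac12$, and applying the Baire property of $\Met^\infty(M)$. Your observation that all the $\mathscr R_s$ live in the same space $\Met^\infty(M)$, so no compatibility argument is needed, is the right point to check.
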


 Hence, for every metric in this generic subset, the minimizer is unique at every rational volume fraction other than one half.
At half volume, the only minimizers are a region and its complement. We do not assert that the same generic set works for all real volume fractions.

\subsection*{Ideas of the proof}
We outline the metric perturbation argument. In
\cite{marshallstevens2025generic}, we used conformal metric perturbations
to remove singularities of isoperimetric boundaries in dimension eight.
These perturbations generally change the volume measure, so a set of
volume $m$ need not retain that volume. Here we use Ebin's exponential
parametrization \cite[Section~8(B)]{ebin1970manifold} to construct metric
perturbations that preserve the volume measure. This allows us to change
the perimeters of competing sets while keeping their volumes fixed.

Let $A$ be a smooth $g$-self-adjoint trace-free endomorphism field.
Set
\begin{equation}
   g_t:=g e^{tA},
   \qquad
   g_t(v,w):=g(e^{tA}v,w).
   \label{eq:intro-metric-path}
\end{equation}
This deformation preserves the volume form, so every measurable
set keeps its volume (Lemma~\ref{lem:metric-expansion} below).

For a finite-perimeter set $E$, let $\mu_E$ be its perimeter measure
and $\nu_E$ its measure-theoretic unit normal. Then
\begin{equation}
   P_{g_t}(E)
   =
   P_g(E)
   -\frac t2\int_{\partial^*E}g(A\nu_E,\nu_E)\,d\mu_E
   +O(t^2)P_g(E).
   \label{eq:intro-perimeter-expansion}
\end{equation}
Hence we define
\begin{equation}
   \mathsf T_E
   :=
   \frac12\left(
      \nu_E^\flat\otimes\nu_E^\flat-\frac1{n+1}g
   \right)\mu_E.
   \label{eq:intro-stress}
\end{equation}

We observe that $\mathsf T_E$ does not detect orientation and satisfies:
\[
   \mathsf T_E=\mathsf T_F
   \quad\Longleftrightarrow\quad
   F=E\ \text{or}\ F=E^c.
\]
We take the trace-free part because among trace-free tensor measures, the pairings with smooth
trace-free fields determine this measure uniquely
(Lemma~\ref{lem:smooth-tensor-separation}). In addition, for any trace-free direction $A$, we still have
\[
   \left.\frac{d}{dt}\right|_{t=0}P_{g_t}(E)
   =
   -\langle\mathsf T_E,A\rangle,
\]
with $E$ fixed and the metric
varying.

\medskip

Now fix $m\in(0,\Vol_g(M))$.
We define
\[
   \Phi(A)
   :=
   \max_{E\in\mathcal I(g,m)}
   \langle\mathsf T_E,A\rangle
\]
on the space $\mathscr X_g^\infty$ of smooth $g$-self-adjoint
trace-free endomorphism fields.
We claim that $\Phi$ is continuous; and because $\Phi$ is a maximum of linear maps,
$\Phi$ is also convex.
Note that maximizing this pairing gives the smallest first variation of
perimeter among the regions in $\mathcal I(g,m)$.

Then by Sharp's theorem (Lemma~\ref{lem:sharp-differentiability}),
$\Phi$ is G\^ateaux differentiable on a generic set.
At each such point $A$, every maximizing region $E$ satisfies
\[
   D\Phi(A)[B]=\langle\mathsf T_E,B\rangle
   \qquad\text{for every }B\in\mathscr X_g^\infty.
\]
Thus all maximizing regions have the same tensor measure
$\mathsf T_E$.
In other words, we obtain exactly one maximizing region
$E_A$ when $2m\neq\Vol_g(M)$.
When $2m=\Vol_g(M)$, the maximizing regions are exactly
$E_A$ and $E_A^c$
(Proposition~\ref{prop:sharp-exposure}).

Finally, consider isoperimetric regions for $g_t$. Because $E_A$ is unique (up to complement), as $t\downarrow0$, all regions in $\mathcal I(g_t,m)$ become
arbitrarily close in $L^1$ to $E_A$ away from half volume,
or to the pair $\{E_A,E_A^c\}$ at half volume.
Their diameter therefore tends to zero in $d_0$ away from half
volume and in $d_\pm$ at half volume (see the proof of
Proposition~\ref{prop:variational-selection}.)

\vspace{1 cm}

\begin{figure}[H]
\centering
\tikzset{every picture/.style={line width=0.75pt}} 

\begin{tikzpicture}[x=0.75pt,y=0.75pt,yscale=-1,xscale=1]

\draw  [color={rgb, 255:red, 0; green, 0; blue, 0 }  ,draw opacity=0 ][line width=0.9]  (33.5,321) .. controls (33.5,299.16) and (46.94,279.84) .. (65,277.32) .. controls (93.56,266.4) and (105.74,289.08) .. (116.24,303.36) .. controls (126.74,312.6) and (171.26,312.6) .. (181.76,303.36) .. controls (192.26,289.08) and (204.44,266.4) .. (233,277.32) .. controls (251.06,279.84) and (264.5,299.16) .. (264.5,321) .. controls (264.5,342.84) and (251.06,362.16) .. (233,364.68) .. controls (204.44,375.6) and (192.26,352.92) .. (181.76,338.64) .. controls (171.26,329.4) and (126.74,329.4) .. (116.24,338.64) .. controls (105.74,352.92) and (93.56,375.6) .. (65,364.68) .. controls (46.94,362.16) and (33.5,342.84) .. (33.5,321) -- cycle ;
\draw  [color={rgb, 255:red, 0; green, 0; blue, 0 }  ,draw opacity=0 ][line width=0.9]  (65,277.32) .. controls (46.94,279.84) and (33.5,299.16) .. (33.5,321) .. controls (33.5,342.84) and (46.94,362.16) .. (65,364.68) .. controls (80.29,352.92) and (84.88,323.03) .. (78.75,300.4) .. controls (76.13,290.71) and (71.55,282.36) .. (65,277.32) -- cycle ;
\draw [color={rgb, 255:red, 0; green, 0; blue, 0 }  ,draw opacity=1 ][line width=1.05]    (65,277.32) .. controls (86.84,294.12) and (86.84,347.88) .. (65,364.68) ;
\draw  [color={rgb, 255:red, 0; green, 0; blue, 0 }  ,draw opacity=0 ][line width=0.9]  (233,277.32) .. controls (251.06,279.84) and (264.5,299.16) .. (264.5,321) .. controls (264.5,342.84) and (251.06,362.16) .. (233,364.68) .. controls (211.16,347.88) and (211.16,294.12) .. (233,277.32) -- cycle ;
\draw [color={rgb, 255:red, 0; green, 0; blue, 0 }  ,draw opacity=1 ][line width=1.05]    (233,277.32) .. controls (211.16,294.12) and (211.16,347.88) .. (233,364.68) ;
\draw  [color={rgb, 255:red, 0; green, 0; blue, 0 }  ,draw opacity=1 ][line width=0.9]  (33.5,321) .. controls (33.5,309.26) and (37.38,298.25) .. (43.78,290.19) .. controls (47.83,285.08) and (52.88,281.16) .. (58.6,278.97) .. controls (60.65,278.19) and (62.79,277.63) .. (65,277.32) .. controls (93.56,266.4) and (105.74,289.08) .. (116.24,303.36) .. controls (126.74,312.6) and (171.26,312.6) .. (181.76,303.36) .. controls (192.26,289.08) and (204.44,266.4) .. (233,277.32) .. controls (251.06,279.84) and (264.5,299.16) .. (264.5,321) .. controls (264.5,342.84) and (251.06,362.16) .. (233,364.68) .. controls (204.44,375.6) and (192.26,352.92) .. (181.76,338.64) .. controls (171.26,329.4) and (126.74,329.4) .. (116.24,338.64) .. controls (105.74,352.92) and (93.56,375.6) .. (65,364.68) .. controls (46.94,362.16) and (33.5,342.84) .. (33.5,321) -- cycle ;
\draw  [color={rgb, 255:red, 0; green, 0; blue, 0 }  ,draw opacity=0 ][line width=0.9]  (377.9,321) .. controls (377.9,299.16) and (391.34,279.84) .. (409.4,277.32) .. controls (437.96,266.4) and (450.14,289.08) .. (460.64,303.36) .. controls (471.14,312.6) and (515.66,312.6) .. (526.16,303.36) .. controls (536.66,289.08) and (548.84,266.4) .. (577.4,277.32) .. controls (595.46,279.84) and (608.9,299.16) .. (608.9,321) .. controls (608.9,342.84) and (595.46,362.16) .. (577.4,364.68) .. controls (548.84,375.6) and (536.66,352.92) .. (526.16,338.64) .. controls (515.66,329.4) and (471.14,329.4) .. (460.64,338.64) .. controls (450.14,352.92) and (437.96,375.6) .. (409.4,364.68) .. controls (391.34,362.16) and (377.9,342.84) .. (377.9,321) -- cycle ;
\draw  [color={rgb, 255:red, 0; green, 0; blue, 0 }  ,draw opacity=0 ][line width=0.9]  (409.4,277.32) .. controls (391.34,279.84) and (377.9,299.16) .. (377.9,321) .. controls (377.9,342.84) and (391.34,362.16) .. (409.4,364.68) .. controls (431.24,347.88) and (431.24,294.12) .. (409.4,277.32) -- cycle ;
\draw  [color={rgb, 255:red, 0; green, 0; blue, 0 }  ,draw opacity=0 ][line width=0.9]  (409.4,277.32) .. controls (391.34,279.84) and (377.9,299.16) .. (377.9,321) .. controls (377.9,342.84) and (391.34,362.16) .. (409.4,364.68) .. controls (443,347.88) and (419.48,294.12) .. (409.4,277.32) -- cycle ;
\draw [color={rgb, 255:red, 0; green, 0; blue, 0 }  ,draw opacity=1 ][line width=1.05]    (409.4,277.32) .. controls (419.48,294.12) and (443,347.88) .. (409.4,364.68) ;
\draw [color={rgb, 255:red, 0; green, 0; blue, 0 }  ,draw opacity=1 ][line width=0.9]  [dash pattern={on 4.72pt off 4.5pt}]  (409.4,277.32) .. controls (431.24,294.12) and (431.24,347.88) .. (409.4,364.68) ;
\draw  [color={rgb, 255:red, 0; green, 0; blue, 0 }  ,draw opacity=1 ][line width=0.9]  (377.9,321) .. controls (377.9,310) and (381.31,299.63) .. (387.01,291.74) .. controls (392.61,283.96) and (400.44,278.57) .. (409.4,277.32) .. controls (437.96,266.4) and (450.14,289.08) .. (460.64,303.36) .. controls (471.14,312.6) and (515.66,312.6) .. (526.16,303.36) .. controls (536.66,289.08) and (548.84,266.4) .. (577.4,277.32) .. controls (595.46,279.84) and (608.9,299.16) .. (608.9,321) .. controls (608.9,342.84) and (595.46,362.16) .. (577.4,364.68) .. controls (548.84,375.6) and (536.66,352.92) .. (526.16,338.64) .. controls (515.66,329.4) and (471.14,329.4) .. (460.64,338.64) .. controls (450.14,352.92) and (437.96,375.6) .. (409.4,364.68) .. controls (391.34,362.16) and (377.9,342.84) .. (377.9,321) -- cycle ;
\draw [color={rgb, 255:red, 0; green, 0; blue, 0 }  ,draw opacity=1 ][line width=0.9]    (278.36,321) -- (361.84,321) ;
\draw [shift={(364.04,321)}, rotate = 180] [color={rgb, 255:red, 0; green, 0; blue, 0 }  ,draw opacity=1 ][line width=0.9]    (14.33,-4.31) .. controls (9.11,-1.83) and (4.33,-0.39) .. (0,0) .. controls (4.33,0.39) and (9.11,1.83) .. (14.33,4.31)   ;
\draw [color={rgb, 255:red, 0; green, 0; blue, 0 }  ,draw opacity=1 ][line width=0.6]    (457.7,293.7) -- (426.95,310.07) ;
\draw [shift={(425.36,310.92)}, rotate = 331.97] [color={rgb, 255:red, 0; green, 0; blue, 0 }  ,draw opacity=1 ][line width=0.6]    (13.02,-3.92) .. controls (8.28,-1.67) and (3.94,-0.36) .. (0,0) .. controls (3.94,0.36) and (8.28,1.67) .. (13.02,3.92)   ;

\draw (149,251.7) node  [font=\small,color={rgb, 255:red, 0; green, 0; blue, 0 }  ,opacity=1 ]  {$\text{Metric } g$};
\draw (498.4,251.7) node  [font=\small,color={rgb, 255:red, 0; green, 0; blue, 0 }  ,opacity=1 ]  {$\text{Metric } g_{t}$};
\draw (57.02,321) node  [font=\large,color={rgb, 255:red, 0; green, 0; blue, 0 }  ,opacity=1 ]  {$E_{1}$};
\draw (240.98,321) node  [font=\large,color={rgb, 255:red, 0; green, 0; blue, 0 }  ,opacity=1 ]  {$E_{2}$};
\draw (401.42,321) node  [font=\large,color={rgb, 255:red, 0; green, 0; blue, 0 }  ,opacity=1 ]  {$E_{t}$};
\draw (462,285) node  [font=\small,color={rgb, 255:red, 0; green, 0; blue, 0 }  ,opacity=1 ]  {$\partial E_{1}$};
\draw (321.2,299) node  [font=\small,color={rgb, 255:red, 0; green, 0; blue, 0 }  ,opacity=1 ]  {$g_{t} =ge^{tA}$};
\draw (321.2,343) node  [font=\footnotesize,color={rgb, 255:red, 0; green, 0; blue, 0 }  ,opacity=1 ]  {$dV_{g_{t}} =dV_{g}$};
\draw (150,395) node  [font=\small,color={rgb, 255:red, 0; green, 0; blue, 0 }  ,opacity=1 ]  {$E_{1} ,\ E_{2} \in \mathcal{I} (g ,m)$};
\draw (444,372.5) node [anchor=north west][inner sep=0.75pt]   [align=left] {
$E_{t} \in \mathcal{I} (g_{t} ,m)$};
\end{tikzpicture}
\caption{The metric $g$ has two isoperimetric regions $E_1$ and $E_2$.
For a suitable volume-preserving perturbation $g_t$, every volume-$m$
minimizer approaches $E_1$ in $L^1$ as $t\downarrow0$.}
\label{fig:dumbbell-selection}
\end{figure}

\bigskip 

Finally, we can prove Theorem~\ref{thm:normalized-volume} using the Baire category theorem. We fix $s\in(0,1)$.
For each positive integer $j$, we consider the metrics for which
$\mathcal I(g,s\Vol_g(M))$ has diameter less than $j^{-1}$ (again, we use $d_0$ when $s\neq \tfrac12$ and $d_\pm$ when $s=\tfrac12$).
We show that for each $j$, the class is open and dense. Then the countable intersection of these sets remains generic and is the required set. For Theorem~\ref{thm:generic-uniqueness}, we use constant rescaling
to obtain a dense set of metrics with a unique isoperimetric region
at a fixed absolute volume. A similar argument to that for Theorem~\ref{thm:normalized-volume} shows that the set of metrics with this uniqueness
property is generic.

\bigskip

This paper is organized as follows. Section~\ref{sec:preliminaries} lists the notation and recalls the notions of convergence used below.
Section~\ref{sec:stress-selection} introduces the volume-form-preserving
metric deformations and the trace-free boundary stress, and proves the
variational selection result. Section~\ref{sec:main-proofs} uses this
result and Baire-category arguments to prove the main theorem and its
corollaries.
Appendix~\ref{sec:Appendix} contains the compactness and continuity results
used in the proofs.

\paragraph{Acknowledgments.} I would like to thank Stephen Kleene for his interest in this work, Zihui Zhao for her interest in this work during my visit to Johns Hopkins University, and Kobe Marshall-Stevens for his helpful comments and suggestions on this manuscript.

\section{Notation and Preliminaries}
\label{sec:preliminaries}

Throughout the paper, $M^{n+1}$ is a closed connected smooth manifold with
$n\geq1$.  We use the following notation.

\begingroup
\renewcommand{\arraystretch}{1.18}
\begin{longtable}{@{}r@{\qquad}p{0.68\textwidth}@{}}
$\Met^\infty(M)$ & the space of smooth Riemannian metrics on $M$; \\
$\mathscr M_m^\infty(M),\ \mathscr P^\infty(M)$ & the fixed-volume and
metric--volume parameter spaces from Definition~\ref{def:isoperimetric-data}; \\
$dV_g$ & the Riemannian volume form of $g$; \\
$\Vol_g(E)$ & the $g$-volume of a measurable set $E$; \\
$P_g(E)=|D\chi_E|_g(M)$ & the $g$-perimeter of a finite-perimeter set
$E$; \\
$\mu_E=|D\chi_E|_g$ & the perimeter measure of $E$, once $g$ has been
fixed; \\
$\mu_i\stackrel{*}{\rightharpoonup}\mu$ & weak-star convergence of Radon
measures; \\
$\partial^*E,\ \nu_E$ & the reduced boundary of $E$ and the
measure-theoretic $g$-unit normal such that $D_g\chi_E=\nu_E\mu_E$ $\mu_E$-almost everywhere; \\
$E^c=M\setminus E$ & the complement of $E$; \\
$E\mathbin\triangle F$ & the symmetric difference
$(E\setminus F)\cup(F\setminus E)$; \\
$I_g(m),\ \mathcal I(g,m)$ & the isoperimetric profile and the family of
volume-$m$ minimizers from Definition~\ref{def:isoperimetric-data}; \\
$\operatorname{End}^{\circ}_{g}(TM)$ & the bundle of $g$-self-adjoint trace-free
endomorphisms of $TM$, defined in \eqref{eq:selector-space}; \\
$\mathscr X_g^\infty$ & the space
$C^\infty(M;\operatorname{End}^{\circ}_{g}(TM))$ defined in
\eqref{eq:selector-space}; \\
$d_0$ & the normalized reference $L^1$-distance from \eqref{eq:reference-distance}; \\
$d_\pm$ & the distance modulo complementation from
\eqref{eq:complement-distance}; \\
$\dist_d(E,K)$ & $\inf_{F\in K}d(E,F)$, the distance from $E$ to a
nonempty class $K$ with respect to $d$; \\
$\diam_dK$ & $\sup_{E,F\in K}d(E,F)$, the diameter of $K$ with
respect to $d$; \\
$E_i\to E$ strictly in $BV_g$ & the strict convergence defined in \eqref{eq:strict-bv}.
\end{longtable}
\endgroup

For each metric $g$, set

\begin{equation}
   \begin{aligned}
      \operatorname{End}^{\circ}_{g}(TM)
      &:=
      \left\{
         A\in\operatorname{End}(TM):
         g(Av,w)=g(v,Aw),\quad \tr A=0
      \right\},\\
      \mathscr X_g^\infty &:=
C^\infty\bigl(M;\operatorname{End}^{\circ}_{g}(TM)\bigr).
   \end{aligned}
   \label{eq:selector-space}
\end{equation}

Thus $\mathscr X_g^\infty$ is the space of smooth $g$-self-adjoint,
trace-free endomorphism fields.
For an endomorphism field $A$, we write
$\|A\|_{0,g}:=\sup_{x\in M}\|A(x)\|_{\operatorname{op},g}$
for the uniform $g$-operator norm.

We fix a smooth reference metric $h$ on $M$ to compare sets when $g$
varies. For measurable sets $E,F\subset M$, define
\begin{equation}\label{eq:reference-distance}
    d_0(E,F):=\Vol_h(E\mathbin\triangle F)/\Vol_h(M).
\end{equation}

To measure the distance from $E$ to either $F$ or its complement, define
\begin{equation}\label{eq:complement-distance}
    d_\pm(E,F):=\min\{d_0(E,F),d_0(E,F^c)\}.
\end{equation}

We write $E_i\to E$ in $L^1$ if
\[
\int_M|\chi_{E_i}-\chi_E|\,dV_h
=
\Vol_h(E_i\mathbin\triangle E)
\longrightarrow0.
\]
Equivalently, $d_0(E_i,E)\to0$. Since $M$ is compact, this notion of
convergence does not depend on the choice of the reference metric $h$.

We also use strict convergence in $BV_g$, which requires convergence
of both the sets and their perimeters. Following
\cite[Definition~3.14]{ambrosio2000functions}, we say that
$E_i\to E$ strictly in $BV_g$ if

\begin{equation}\label{eq:strict-bv}
    d_0(E_i,E)\to0  \qquad \text{and} \qquad  P_g(E_i)\to P_g(E).
\end{equation}

\section{Volume-preserving metric perturbations}
\label{sec:stress-selection}

The conclusion of this section is the following proposition. We construct small metric perturbations that preserve the volume form and
make the family of isoperimetric regions have arbitrarily small diameter.

\begin{proposition}
\label{prop:variational-selection}
Fix $(g,m)\in\mathscr P^\infty(M)$.  For every $\delta>0$ and every
$C^\infty$-neighborhood $\mathscr O$ of $g$, there exists
$A\in\mathscr X_g^\infty$ such that, for all sufficiently small $t>0$,
the metric $g_t:=g e^{tA}$ belongs to $\mathscr O$, satisfies
$dV_{g_t}=dV_g$, and has the following properties:
\begin{enumerate}[(i)]
\item if $2m\neq\Vol_g(M)$, then
      $\diam_{d_0}\mathcal I(g_t,m)<\delta$;
\item if $2m=\Vol_g(M)$, then
      $\diam_{d_\pm}\mathcal I(g_t,m)<\delta$.
\end{enumerate}
\end{proposition}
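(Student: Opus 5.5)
We prove Proposition~\ref{prop:variational-selection} by the selection scheme from the introduction. The steps are: volume-preserving perturbations, a convex selection functional, generic differentiability, and a first-order comparison of perimeters.

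\emph{Step 1: the perturbation.} For $A\in\mathscr X_g^\infty$ we check that $\det e^{tA}=e^{t\tr A}=1$, so $dV_{g_t}=dV_g$. Hence $\Vol_{g_t}(E)=\Vol_g(E)$ for every measurable $E$, and the admissible class of volume-$m$ sets is the same for all $t$. Since $g_t\to g$ smoothly as $t\to0$, we have $g_t\in\mathscr O$ for small $t$. Next we establish the expansion \eqref{eq:intro-perimeter-expansion} with a remainder bounded by $C\|A\|_{0,g}^2t^2P_g(E)$, uniformly in $E$. Pointwise, the $g_t$-perimeter density is $|e^{-tA/2}\nu|$ times the $g$-density (computed via the dual metric on normals). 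A Taylor expansion of $\sqrt{g(e^{-tA}\nu,\nu)}$ then gives the term $-\tfrac t2 g(A\nu,\nu)$. Because $\nu_E$ is a unit vector, the trace-free part of $\mathsf T_E$ pairs with $A$ to give exactly this first-order term.

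\emph{Step 2: the selection functional.} The set $\mathcal I(g,m)$ is compact in $L^1$, and minimizing sequences converge strictly in $BV_g$, because all members have the same perimeter $I_g(m)$. By Reshetnyak continuity, $E\mapsto\langle\mathsf T_E,A\rangle$ is therefore continuous on $\mathcal I(g,m)$. It follows that $\Phi(A)=\max_{E\in\mathcal I(g,m)}\langle\mathsf T_E,A\rangle$ is finite, convex, and Lipschitz with respect to $\|\cdot\|_{0,g}$. Hence $\Phi$ is continuous on the Fréchet space $\mathscr X_g^\infty$. Sharp's theorem (Lemma~\ref{lem:sharp-differentiability}) then gives $A$ with $\|A\|$ small at which $\Phi$ is Gâteaux differentiable. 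By Proposition~\ref{prop:sharp-exposure}, the maximizers form a single set $E_A$ when $2m\ne\Vol_g(M)$, and the pair $\{E_A,E_A^c\}$ at half volume. We fix such an $A$ and let $\eta:=\Phi(A)-\sup\{\langle\mathsf T_E,A\rangle: E\in\mathcal I(g,m),\ d(E,E_A)\ge\delta/2\}$, with $d=d_0$ or $d_\pm$ as appropriate. By compactness and continuity, $\eta>0$ (if the set in the supremum is empty there is nothing to prove).

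\emph{Step 3: selection.} Suppose the conclusion fails. Then there are $t_i\downarrow0$ and sets $F_i\in\mathcal I(g_{t_i},m)$ with $d(F_i,E_A)\ge\delta/2$. Since $F_i$ has volume $m$ for $g$ as well, Step 1 gives
\[P_g(F_i)\le P_{g_{t_i}}(F_i)(1+Ct_i)\le P_{g_{t_i}}(E_A)(1+Ct_i)=I_g(m)+O(t_i).\]
Therefore $F_i$ is a minimizing sequence for $g$. After passing to a subsequence, $F_i\to F\in\mathcal I(g,m)$ strictly in $BV_g$, and $d(F,E_A)\ge\delta/2$. Comparing again at order $t_i$ gives
\[I_g(m)-\tfrac{t_i}{2}\cdot 2\langle\mathsf T_{F_i},A\rangle+O(t_i^2)\le I_g(m)-t_i\Phi(A)+O(t_i^2),\]
using $P_g(F_i)\ge I_g(m)$ on the left. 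Dividing by $t_i$ yields $\langle\mathsf T_{F_i},A\rangle\ge\Phi(A)-O(t_i)$. Reshetnyak continuity under strict convergence then gives $\langle\mathsf T_F,A\rangle\ge\Phi(A)$, which contradicts the definition of $\eta$. Hence every member of $\mathcal I(g_t,m)$ is within $\delta/2$ of $E_A$ (or of $\{E_A,E_A^c\}$), so the diameter is less than $\delta$.

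The main obstacle is the \emph{uniformity} in Step 1 and the strict-convergence limit in Step 3. The remainder must be controlled uniformly over all finite-perimeter sets, and $\langle\mathsf T_{F_i},A\rangle$ must pass to the limit. For the remainder I would bound it pointwise on $\partial^*F_i$. For the limit I would invoke Reshetnyak's continuity theorem for the integrand $\nu\mapsto g(A\nu,\nu)$, which is continuous and even in $\nu$.
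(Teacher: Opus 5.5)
Your proof is correct and follows essentially the paper's route: Sharp's generic G\^ateaux differentiability of $\Phi$, stress rigidity to reduce the maximizers to $E_A$ (or $\{E_A,E_A^c\}$ at half volume), and a first-order perimeter comparison combined with compactness and Reshetnyak continuity to force every $g_t$-minimizer toward $E_A$. The only difference is organizational. You run a single contradiction argument around a $\delta/2$-ball, and your gap $\eta$ is not really needed, since you directly show the limit $F$ is a maximizer. The paper instead first proves $\sup_{E\in\mathcal I(g_t,m)}\dist_{d_0}(E,\mathcal K_A)\to0$ and then applies the triangle inequality.
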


\subsection{Perimeter variation and boundary stresses}
\label{sec:metric-deformation}

Following Ebin \cite[Section~8(B)]{ebin1970manifold}, we use exponential
deformations to vary the metric while preserving its volume form.

\begin{definition}
\label{def:exponential-metric-deformation}
For a fixed metric $g$ and $A\in\mathscr X_g^\infty$, define the metric
path
\begin{equation}
   g_t:=g e^{tA},
   \qquad
   g_t(v,w):=g(e^{tA}v,w).
   \label{eq:metric-deformation}
\end{equation}
\end{definition}

\begin{lemma}
\label{lem:metric-expansion}
The volume forms satisfy
\begin{equation}
   dV_{g_t}=dV_g.
   \label{eq:exact-volume-form}
\end{equation}
For every finite-perimeter set $E$,
\begin{equation}
   P_{g_t}(E)
   =
   P_g(E)
   -\frac t2
      \int_{\partial^*E}g(A\nu_E,\nu_E)\,d\mu_E
   +R_t(E),
   \label{eq:perimeter-expansion}
\end{equation}
where, for every $t\in\mathbb R$,
\begin{equation}
   |R_t(E)|
   \leq
   \frac14t^2\|A\|_{0,g}^2
   e^{|t|\|A\|_{0,g}/2}P_g(E).
   \label{eq:uniform-remainder}
\end{equation}
\end{lemma}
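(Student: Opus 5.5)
The plan is to reduce everything to a pointwise computation on the reduced boundary, by expressing $P_{g_t}(E)$ as the integral against $\mu_E$ of the $g_t$-dual norm of the conormal. Set $a:=\|A\|_{0,g}$ throughout.

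\textbf{Volume form.} At each $x\in M$, choose a $g$-orthonormal frame diagonalizing the $g$-self-adjoint endomorphism $A(x)$, with eigenvalues $\lambda_1,\dots,\lambda_{n+1}$. These satisfy $\sum_i\lambda_i=\tr A=0$ and $|\lambda_i|\le a$. In this frame $g_t$ is the matrix $\operatorname{diag}(e^{t\lambda_i})$. Hence $\det(g_t)/\det(g)=e^{t\tr A}=1$, which gives \eqref{eq:exact-volume-form}.

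\textbf{Representation of the perimeter.} I use the metric-free description of $D\chi_E$ as a covector-valued measure. For a smooth vector field $X$, the pairing $\int_E\operatorname{div}_{g_t}X\,dV_{g_t}$ equals $\int_E\operatorname{div}_gX\,dV_g$, because the divergence depends only on the volume form. Moreover,
\[
-\int_E\operatorname{div}_gX\,dV_g=\int g(\nu_E,X)\,d\mu_E .
\]
Taking the supremum over $|X|_{g_t}\le1$, and using the standard identification of the total variation of a vector measure with the integral of the pointwise dual norm of its density, gives
\[
P_{g_t}(E)=\int_{\partial^*E}\bigl|\nu_E^\flat\bigr|_{g_t^*}\,d\mu_E ,
\qquad\text{where}\qquad
\bigl|\nu_E^\flat\bigr|_{g_t^*}=\sup_{g(e^{tA}X,X)\le1}g(\nu_E,X)=\sqrt{g(e^{-tA}\nu_E,\nu_E)} .
\]

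\textbf{Pointwise Taylor estimate.} Fix a point of $\partial^*E$ and write $\nu_E=\sum_i\nu_ie_i$ in the diagonalizing frame, so that $p_i:=\nu_i^2\ge0$ and $\sum_ip_i=1$. Define
\[
\varphi(t):=\sum_ip_ie^{-t\lambda_i},\qquad f:=\sqrt\varphi .
\]
Then $f(0)=1$ and $f'(0)=\varphi'(0)/2=-\tfrac12g(A\nu_E,\nu_E)$. For the second derivative,
\[
f''=\frac{2\varphi\varphi''-\varphi'^2}{4\varphi^{3/2}} .
\]
By Cauchy--Schwarz with respect to the weights $p_ie^{-t\lambda_i}$, we have $\varphi'^2\le\varphi\varphi''$. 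This yields both bounds at once:
\[
0\le\frac{\varphi''}{4\sqrt\varphi}\le f''\le\frac{\varphi''}{2\sqrt\varphi} .
\]
Since $\varphi''=\sum_ip_i\lambda_i^2e^{-t\lambda_i}\le a^2\varphi$ and $\varphi(s)\le e^{|s|a}$, we get $0\le f''(s)\le\tfrac12a^2e^{|s|a/2}$. Taylor's theorem with Lagrange remainder then gives, for all real $t$,
\[
\Bigl|f(t)-1+\tfrac t2g(A\nu_E,\nu_E)\Bigr|\le\tfrac14t^2a^2e^{|t|a/2} .
\]
In fact the remainder is nonnegative.

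\textbf{Conclusion.} Integrating the pointwise estimate against $\mu_E$, whose total mass is $P_g(E)$, yields \eqref{eq:perimeter-expansion} together with the sharp bound \eqref{eq:uniform-remainder}.

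The main obstacle is getting the exponent $|t|a/2$ rather than something weaker. This relies on bounding $f''$ through $\varphi''/\sqrt\varphi$ after discarding the term $-\varphi'^2$. That step uses the Cauchy--Schwarz (log-convexity) inequality for $\varphi$ only to guarantee $f''\ge0$, so that no lower bound involving $\varphi^{-3/2}$ (which would cost extra exponentials) is ever needed.
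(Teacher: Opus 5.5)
Your proposal is correct and takes essentially the paper's route. Both arguments get $dV_{g_t}=dV_g$ from $\det e^{tA}=e^{t\tr A}=1$, and both derive $P_{g_t}(E)=\int_{\partial^*E}|e^{-tA/2}\nu_E|_g\,d\mu_E$ from the fact that the Gauss--Green covector measure depends only on the volume form; both then integrate a Taylor estimate with $0\le f''\le\tfrac12\|A\|_{0,g}^2e^{|t|\|A\|_{0,g}/2}$, obtained from Cauchy--Schwarz. The differences are cosmetic: you compute the determinant and the derivatives in a $g$-orthonormal eigenframe of $A(x)$, whereas the paper cites Ebin and writes the same computation invariantly.
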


\begin{proof}
Since $A$ is trace-free, Ebin's formula
\cite[Lemma~8.8]{ebin1970manifold} gives
\[
   \frac{dV_{g_t}}{dV_g}
   =
   \bigl(\det(e^{tA})\bigr)^{1/2}
   =
   e^{t\tr A/2}
   =
   1.
\]
This proves \eqref{eq:exact-volume-form}.

Write $\mu_{E,t}:=|D\chi_E|_{g_t}$.
At $\mu_E$-almost every $x\in\partial^*E$, let $\nu:=\nu_E(x)$ be the
$g$-unit normal and let $\nu_t$ be the corresponding $g_t$-unit normal.
Then
\[
   \nu_t
   =
   \frac{e^{-tA}\nu}{|e^{-tA}\nu|_{g_t}}
   =
   \frac{e^{-tA}\nu}{|e^{-tA/2}\nu|_g}.
\]
The two metrics have the same volume form, so they define the same
divergence on vector fields.  The Gauss--Green formula therefore gives
the following equality of covector-valued measures:
\[
   \nu_t^{\flat_{g_t}}\,\mu_{E,t}
   =
   \nu_E^{\flat_g}\,\mu_E.
\]
Evaluating these covectors on $\nu_t$ gives
\[
   \mu_{E,t}=g(\nu_E,\nu_t)\,\mu_E,
   \qquad
   \frac{d\mu_{E,t}}{d\mu_E}=g(\nu_t,\nu_E).
\]
Since $A$ is $g$-self-adjoint,
\[
\begin{aligned}
   g(\nu_t,\nu)
   &=
   \frac{g(e^{-tA}\nu,\nu)}
        {|e^{-tA/2}\nu|_g}\\
   &=
   \frac{|e^{-tA/2}\nu|_g^2}
        {|e^{-tA/2}\nu|_g}
   =
   |e^{-tA/2}\nu|_g.
\end{aligned}
\]
Thus
\begin{equation}
   P_{g_t}(E)
   =
   \int_{\partial^*E}
   |e^{-tA/2}\nu_E|_g\,d\mu_E.
   \label{eq:exact-perimeter-formula}
\end{equation}

We now expand the integrand in $t$.  Fix $x\in M$ and a $g$-unit vector
$\nu\in T_xM$.  Differentiation gives
\[
   \frac{d}{dt}|e^{-tA/2}\nu|_g
   =
   -\frac{g(Ae^{-tA}\nu,\nu)}
          {2|e^{-tA/2}\nu|_g}
\]
and
\[
   \frac{d^2}{dt^2}|e^{-tA/2}\nu|_g
   =
   \frac{|Ae^{-tA/2}\nu|_g^2}
        {2|e^{-tA/2}\nu|_g}
   -
   \frac{g(Ae^{-tA}\nu,\nu)^2}
        {4|e^{-tA/2}\nu|_g^3}.
\]
The Cauchy--Schwarz inequality and the bound
$|e^{-tA/2}\nu|_g\leq e^{|t|\|A\|_{0,g}/2}$ imply
\[
   0
   \leq
   \frac{d^2}{dt^2}|e^{-tA(x)/2}\nu|_g
   \leq
   \frac12\|A\|_{0,g}^2e^{|t|\|A\|_{0,g}/2}.
\]
Taylor's theorem, followed by integration over $\partial^*E$, proves
\eqref{eq:perimeter-expansion} and \eqref{eq:uniform-remainder}.
\end{proof}

In particular,
\begin{equation}
   \left.\frac{d}{dt}\right|_{t=0}P_{g_t}(E)
   =
   -\frac12\int_{\partial^*E}g(A\nu_E,\nu_E)\,d\mu_E.
   \label{eq:perimeter-first-variation}
\end{equation}
For fixed $E$, this derivative is linear in $A$.  The identity
$g(A\nu_E,\nu_E)
=\langle\nu_E^\flat\otimes\nu_E^\flat,A\rangle_g$
leads to the following definition.

\begin{definition}
\label{def:boundary-stress}
For a finite-perimeter set $E$, define its trace-free boundary stress by
\begin{equation}
   \mathsf T_E
   :=
   \frac12\left(
      \nu_E^\flat\otimes\nu_E^\flat-\frac1{n+1}g
   \right)\mu_E.
   \label{eq:boundary-stress}
\end{equation}
This is a symmetric tensor-valued Radon measure.  It is trace-free because
\[
   \tr_g\left(
      \nu_E^\flat\otimes\nu_E^\flat-\frac1{n+1}g
   \right)
   =|\nu_E|_g^2-1
   =0.
\]
Reversing the normal leaves $\mathsf T_E$ unchanged.
\end{definition}

For $A\in\mathscr X_g^\infty$, its pairing with the measure in
\eqref{eq:boundary-stress} is
\[
   \langle\mathsf T_E,A\rangle
   :=\frac12\int_{\partial^*E}g(A\nu_E,\nu_E)\,d\mu_E.
\]
Indeed, $\tr A=0$ gives
\[
   \left\langle
      \nu_E^\flat\otimes\nu_E^\flat-\frac1{n+1}g,A
   \right\rangle_g
   =g(A\nu_E,\nu_E).
\]
Thus $\langle\mathsf T_E,A\rangle$ is the negative of the perimeter
derivative in \eqref{eq:perimeter-first-variation}.  Setting
$\ell_A(E):=-\langle\mathsf T_E,A\rangle$, we can write
\eqref{eq:perimeter-expansion} as
\[
   P_{g_t}(E)=P_g(E)+t\ell_A(E)+R_t(E).
\]

\begin{proposition}
\label{prop:stress-rigidity}
Let $E,F\subset M$ be finite-perimeter sets.  Then
\begin{equation}
   \mathsf T_E=\mathsf T_F
   \quad\Longleftrightarrow\quad
   F=E\ \text{or}\ F=E^c
   \quad\text{modulo null sets}.
   \label{eq:stress-rigidity}
\end{equation}
\end{proposition}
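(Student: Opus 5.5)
The direction ``$\Leftarrow$'' is immediate. If $F=E$ modulo null sets, then $D\chi_F=D\chi_E$, so $\mu_F=\mu_E$ and $\nu_F=\nu_E$ $\mu_E$-a.e. If $F=E^c$, then $D\chi_F=-D\chi_E$, so $\mu_F=\mu_E$ and $\nu_F=-\nu_E$. In both cases $\nu_F^\flat\otimes\nu_F^\flat=\nu_E^\flat\otimes\nu_E^\flat$, since the tensor is quadratic in the normal, and hence $\mathsf T_F=\mathsf T_E$.

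For ``$\Rightarrow$'', the plan is to recover from $\mathsf T_E$ first the perimeter measure $\mu_E$, then the unoriented normal line field. From these we reconstruct $|D\chi_E|$-information enough to determine $\chi_E$ up to the sign ambiguity $\chi\mapsto 1-\chi$.

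\emph{Step 1: recover $\mu_E$.} I would take the total variation of the tensor measure with respect to the $g$-norm on symmetric $2$-tensors. Pointwise,
\[
   \Bigl|\nu^\flat\otimes\nu^\flat-\tfrac1{n+1}g\Bigr|_g^2=1-\tfrac2{n+1}+\tfrac1{n+1}=\tfrac{n}{n+1}.
\]
Hence $|\mathsf T_E|=\tfrac12\sqrt{n/(n+1)}\,\mu_E$. This is nonzero because $n\ge1$; the positivity of the dimension of the trace-free part is exactly what is needed here. Thus $\mathsf T_E=\mathsf T_F$ gives $\mu_E=\mu_F=:\mu$.

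\emph{Step 2: recover the line field.} From Step 1 the densities agree $\mu$-a.e.:
\[
   \nu_E^\flat\otimes\nu_E^\flat-\tfrac1{n+1}g=\nu_F^\flat\otimes\nu_F^\flat-\tfrac1{n+1}g.
\]
So the rank-one projections coincide, and $\nu_F(x)=\sigma(x)\nu_E(x)$ with $\sigma(x)\in\{\pm1\}$ for $\mu$-a.e.\ $x$.

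\emph{Step 3: the sign is globally constant.} This is the main obstacle, since a priori $\sigma$ is only a measurable sign on $\partial^*E$. Set $u:=\chi_E+\chi_F$ and $w:=\chi_E-\chi_F$. Then $Du=(1+\sigma)\nu_E\mu$ and $Dw=(1-\sigma)\nu_E\mu$. Let $S_\pm:=\{\sigma=\pm1\}$.

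I would use De Giorgi structure at $\mu$-a.e.\ point $x\in S_+$, which is a point of the common reduced boundary where both blow-ups are the same half-space. There $E$ and $F$ blow up to the same half-space, so $w$ has density $0$ on both sides and $x$ is not in the jump set of $w$. But $Dw$ is $\chi_{S_-}\,2\nu_E\mu$, which is concentrated on the rectifiable set $\partial^*E$ and hence is purely a jump part. Consistently, $Dw$ vanishes on $S_+$ and lives on $S_-$. Symmetrically, the function $v:=\chi_E+\chi_F-1$ has $Dv=\chi_{S_+}2\nu_E\mu$.

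The key observation is that $w$ takes values in $\{-1,0,1\}$, with $|w|=\chi_{E\triangle F}$. If $E\ne F$, then $E\triangle F$ has positive measure. I would then argue with $|w|$ and $1-|v|$, both characteristic functions:
\[
   \chi_{E\triangle F}=|w|,\qquad \chi_{(E\triangle F)^c}=1-|v|.
\]
At $\mu$-a.e.\ point of $S_+$, both $E$ and $F$ have the same half-space blow-up, so $E\triangle F$ has density $0$. At points of $S_-$, $E$ and $F$ blow up to complementary half-spaces, so $E\triangle F$ has density $1$. Off $\partial^*E$ (up to $\mathcal H^n$-null sets, by Federer's theorem), $E$ and $F$ both have density in $\{0,1\}$ and no jump, so again $E\triangle F$ has density in $\{0,1\}$.

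Hence $G:=E\triangle F$ has essential boundary $\mathcal H^n$-null, i.e.\ $P(G)=0$. Since $M$ is connected, the constancy theorem for sets of finite perimeter gives $G=\emptyset$ or $G=M$ modulo null sets. These cases are exactly $F=E$ or $F=E^c$.

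Verifying the density claims carefully is where the work lies. In particular I would need that $\mu$-a.e.\ point lies in $\partial^*E\cap\partial^*F$ with $\nu_F=\pm\nu_E$, which follows from $\mu_E=\mu_F$ and De Giorgi's theorem.
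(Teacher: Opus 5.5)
Your proof is correct, and Step 1 is exactly the paper's first step: the pointwise norm $\sqrt{n/(n+1)}$, and then total variations give $\mu_E=\mu_F$.

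\textbf{How the paper finishes.} From there the paper is shorter. It observes that $\mu_E=\mu_F$ forces $\partial^*E$ and $\partial^*F$ to agree up to an $\mathcal H^n_g$-null set. It then cites the symmetric-difference formula
\[
P_g(E\mathbin\triangle F)=\mathcal H^n_g(\partial^*E\setminus\partial^*F)+\mathcal H^n_g(\partial^*F\setminus\partial^*E)
\]
(Maggi, Exercise~16.5) to get $P_g(E\mathbin\triangle F)=0$ directly. The constancy theorem on connected $M$, which you also use, finishes the proof.

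\textbf{How your Step 3 compares.} Your density computation shows that $E\mathbin\triangle F$ has density $0$ where the normals agree, density $1$ where they are opposite, and density $0$ or $1$ off the common reduced boundary by Federer's theorem. This is in effect a proof of that formula in the special case at hand. Your version is therefore more self-contained, while the paper's is a citation.

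\textbf{Step 2 is not needed.} For any two finite-perimeter sets, $\nu_F=\pm\nu_E$ holds at $\mathcal H^n$-a.e.\ point of $\partial^*E\cap\partial^*F$; this is built into the symmetric-difference formula. So the paper's argument shows that equality of the perimeter measures alone already forces $F\in\{E,E^c\}$ on a connected manifold; the directional information in $\mathsf T_E$ is redundant for this rigidity statement. Reading the sign $\sigma$ off the tensor only spares you that rectifiability fact.

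\textbf{A minor slip.} The $u,v,w$ discussion is never used, and it contains an error. The function $v=\chi_E+\chi_F-1$ satisfies $|v|=1$ exactly off $E\mathbin\triangle F$. Hence $\chi_{(E\triangle F)^c}=|v|$, not $1-|v|$; equivalently, $1-|v|=|w|=\chi_{E\triangle F}$. This does not affect your argument.
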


\begin{proof}[Proof of Proposition~\ref{prop:stress-rigidity}]
The reverse implication follows because complementation reverses the
normal and preserves the perimeter measure.
Suppose $\mathsf T_E=\mathsf T_F$.  For every $g$-unit vector $\nu$,
\[
   |\nu^\flat\otimes\nu^\flat-g/(n+1)|_g
   =\sqrt{n/(n+1)}.
\]
Taking total variations gives $\mu_E=\mu_F$.  Hence
$\partial^*E$ and $\partial^*F$ agree up to an
$\mathcal H_g^n$-null set.
The symmetric-difference formula
\cite[Exercise~16.5]{maggi2012sets}, applied in coordinate charts, gives
\[
\begin{aligned}
   P_g(E\mathbin\triangle F)
   &=\mathcal H_g^n(\partial^*E\setminus\partial^*F)
     +\mathcal H_g^n(\partial^*F\setminus\partial^*E)\\
   &=0.
\end{aligned}
\]
On a connected manifold, a set with zero perimeter has either zero
volume or full volume
\cite[Exercise~12.17]{maggi2012sets}.  Applying this fact to
$E\mathbin\triangle F$ gives $F=E$ or $F=E^c$ modulo null sets.
\end{proof}

\subsection{Choosing the metric perturbation}
\label{sec:exposure}

Fix $(g,m)\in\mathscr P^\infty(M)$.
For every finite-perimeter set $E$ and every
$A\in\mathscr X_g^\infty$,

\[
   |\langle\mathsf T_E,A\rangle|
   \leq\frac12P_g(E)\|A\|_{0,g}.
\]

Thus $\mathsf T_E$ defines a continuous linear functional on
$\mathscr X_g^\infty$.  Define

\begin{equation}
   \Phi(A)
   :=
   \max_{E\in\mathcal I(g,m)}
   \langle\mathsf T_E,A\rangle.
   \label{eq:support-function}
\end{equation}

The maximum is attained by
Propositions~\ref{prop:joint-compactness} and
\ref{prop:stress-continuity}. As a maximum of linear functionals,
$\Phi$ is convex. For any $A,B \in \mathscr X^\infty_g$, suppose \(\Phi(A)=\langle\mathsf T_{E_A},A\rangle.\) By definition, 
\(\Phi(B)\geq\langle\mathsf T_{E_A},B\rangle\); hence
\begin{align}
\Phi(A)-\Phi(B)
&\leq \langle\mathsf T_{E_A},A-B\rangle\\
&=\frac12\int_{\partial^*E_A}
g\bigl((A-B)\nu_{E_A},\nu_{E_A}\bigr)\,d\mu_{E_A}\\
&\leq \frac12\|A-B\|_{0,g}\,P_g(E_A)\\
&=\frac12 I_g(m)\,\|A-B\|_{0,g}.
\end{align}

So we can reverse $A$ and $B$ to get
\begin{equation*}
     |\Phi(A)-\Phi(B)| \leq \frac12 I_g(m)\,\|A-B\|_{0,g},
\end{equation*}
which shows that $\Phi$ is continuous in the Fr\'echet topology.

For $A\in\mathscr X_g^\infty$, let

\begin{equation}
   \mathcal K_A
   :=
   \argmin_{E\in\mathcal I(g,m)}\ell_A(E)
   =
   \argmax_{E\in\mathcal I(g,m)}
   \langle\mathsf T_E,A\rangle.
   \label{eq:selected-minimizer-set}
\end{equation}

Since $\ell_A(E)=-\langle\mathsf T_E,A\rangle$, the sets in
$\mathcal K_A$ have the smallest perimeter derivative at $t=0$ along
$g_t=g e^{tA}$ among the regions in $\mathcal I(g,m)$.  Similarly as above, the set
$\mathcal K_A$ is nonempty and compact by the same two propositions in Appendix \ref{sec:Appendix}.

Here we choose the trace-free part in Definition~\ref{def:boundary-stress} because our metric perturbations use only trace-free directions $A$. For any signed
Radon measure $\lambda$ and any $A\in\mathscr X_g^\infty$,

\[
   \langle g\lambda,A\rangle
   =\int_M\tr_gA\,d\lambda
   =0.
\]

Thus testing with trace-free fields cannot distinguish two symmetric
tensor-valued measures that differ by $g\lambda$.  Hence it does determine
a trace-free tensor-valued measure uniquely (as the next lemma shows.)
We therefore use the trace-free part $\mathsf T_E$ of
$\frac12(\nu_E^\flat\otimes\nu_E^\flat)\mu_E$.  It has the same
pairings with all admissible directions $A$.

\begin{lemma}
\label{lem:smooth-tensor-separation}
Let $\mathsf S$ be a trace-free symmetric tensor-valued Radon measure on
$M$.  If $\langle\mathsf S,A\rangle=0$ for every
$A\in\mathscr X_g^\infty$, then $\mathsf S=0$.
\end{lemma}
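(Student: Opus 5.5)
The idea is to reduce to the scalar fact that a signed Radon measure annihilating all smooth functions is zero, by localizing with a smooth partition of unity and testing against smooth trace-free fields built from a local frame.

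First write $\mathsf S$ in polar form. By Radon--Nikodym, with $|\mathsf S|$ its total variation measure,
\[
   \mathsf S=\sigma\,|\mathsf S|,
\]
where $\sigma$ is a $|\mathsf S|$-measurable section of symmetric $2$-tensors with $|\sigma|_g=1$ and $\tr_g\sigma=0$ $|\mathsf S|$-almost everywhere. Trace-freeness of the measure means exactly that the scalar measure $\tr_g\mathsf S$ vanishes, and this gives $\tr_g\sigma=0$ a.e. The pairing with an endomorphism field $A$ is
\[
   \langle\mathsf S,A\rangle=\int_M\langle\sigma,A^\flat\rangle_g\,d|\mathsf S|,
\]
where $A^\flat(v,w)=g(Av,w)$. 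Under the $\flat$ map, $g$-self-adjoint trace-free endomorphisms correspond to symmetric trace-free $2$-tensors.

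Next, localize. Cover $M$ by finitely many coordinate charts $U_\alpha$, and on each fix a smooth $g$-orthonormal frame. This frame gives a smooth local basis $\{B_{\alpha,k}\}_{k=1}^{N}$ of the bundle $\operatorname{End}^\circ_g(TM)|_{U_\alpha}$, where $N=\frac{n(n+3)}{2}$; for instance, use $e_i\otimes e_j^\flat+e_j\otimes e_i^\flat$ for $i<j$, together with $e_i\otimes e_i^\flat-e_{n+1}\otimes e_{n+1}^\flat$. Take any $\varphi\in C^\infty_c(U_\alpha)$. Then $\varphi B_{\alpha,k}$, extended by zero, lies in $\mathscr X_g^\infty$, so the hypothesis gives
\[
   \int_M\varphi\,\langle\sigma,B^\flat_{\alpha,k}\rangle_g\,d|\mathsf S|=0.
\]
So the signed Radon measure $\lambda_{\alpha,k}:=\langle\sigma,B^\flat_{\alpha,k}\rangle_g|\mathsf S|$ on $U_\alpha$ annihilates $C^\infty_c(U_\alpha)$. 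By density of $C^\infty_c$ in $C_c$ in the sup norm and the Riesz representation theorem, $\lambda_{\alpha,k}=0$. Hence $\langle\sigma,B^\flat_{\alpha,k}\rangle_g=0$ $|\mathsf S|$-a.e.\ on $U_\alpha$ for every $k$.

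The $B^\flat_{\alpha,k}(x)$ span the trace-free symmetric tensors at each $x\in U_\alpha$, and $\sigma(x)$ is itself trace-free. So $\sigma(x)$ is orthogonal to a space containing it, and therefore $\sigma=0$ $|\mathsf S|$-a.e.\ on $U_\alpha$. This contradicts $|\sigma|_g=1$ unless $|\mathsf S|(U_\alpha)=0$. Summing over the finite cover gives $|\mathsf S|(M)=0$, that is, $\mathsf S=0$.

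The main obstacle is the bookkeeping at the first step: making precise that the trace-free condition on the measure passes to its density $\sigma$. Once that is in place, the spanning argument is where trace-freeness is actually used. The density step is routine, since $\varphi$ ranges over all of $C^\infty_c(U_\alpha)$ and each $\lambda_{\alpha,k}$ is a finite measure on the chart.
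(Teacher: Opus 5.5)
Your argument is correct, but it takes a different route from the paper's.

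The paper works on the test-field side. For an arbitrary smooth $g$-self-adjoint field $B$ it sets $B^\circ:=B-(\tr_gB)\Id/(n+1)\in\mathscr X_g^\infty$. Because $\mathsf S$ is trace-free, it annihilates the pure-trace part, so $\langle\mathsf S,B\rangle=\langle\mathsf S,B^\circ\rangle=0$. Thus $\mathsf S$ vanishes on all smooth self-adjoint fields, hence by uniform density on all continuous ones, and duality then gives $\mathsf S=0$.

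You instead keep the test fields trace-free and put the trace-free condition on the polar density $\sigma$. You localize with orthonormal frames and cutoffs, so only the scalar Riesz theorem is needed. You then finish with pointwise linear algebra: $\sigma(x)$ lies in the trace-free space to which it is orthogonal.

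The paper's route is three lines and frame-free. Its last step, however, tacitly uses the vector-valued Riesz (polar decomposition) fact that a tensor measure vanishing on all continuous fields is zero. Your route makes that step explicit and self-contained, at the cost of chart bookkeeping. Both rest on the same orthogonal splitting of symmetric tensors into trace-free and pure-trace parts.

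A minor point: the diagonal generators $e_i\otimes e_i^\flat-e_{n+1}\otimes e_{n+1}^\flat$ should be indexed by $i=1,\dots,n$.
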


\begin{proof}
For any smooth $g$-self-adjoint endomorphism field $B$, set
$B^\circ:=B-(\tr_gB)\Id/(n+1)$.  Since $\mathsf S$ is trace-free and
$B^\circ\in\mathscr X_g^\infty$,

\[
   \langle\mathsf S,B\rangle
   =
   \langle\mathsf S,B^\circ\rangle
   =
   0.
\]

Smooth $g$-self-adjoint fields are uniformly dense in continuous
$g$-self-adjoint fields.  Hence $\mathsf S$ pairs to zero with every
continuous test field, so $\mathsf S=0$.
\end{proof}

The space $\mathscr X_g^\infty$ is a closed subspace of the separable
Fr\'echet space $C^\infty(M;\operatorname{End}(TM))$.  It is therefore
also a separable Fr\'echet space.  We apply the following theorem of
Sharp \cite[Theorem~2.1]{sharp1990differentiability}.

\begin{lemma}
\label{lem:sharp-differentiability}
Let $X$ be a separable Fr\'echet space and let
$\Psi:X\to\mathbb R$ be continuous and convex.  Then $\Psi$ is
G\^ateaux differentiable on a generic subset of
$X$.
\end{lemma}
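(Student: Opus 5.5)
The strategy is the classical Mazur argument for separable Banach spaces, carried over to the Fr\'echet setting. Fix $x\in X$. For every $v\in X$ the function $t\mapsto\Psi(x+tv)$ is convex, so the one-sided directional derivative
\[
   p_x(v):=\Psi'(x;v)=\lim_{s\downarrow0}\frac{\Psi(x+sv)-\Psi(x)}{s}
   =\inf_{s>0}\frac{\Psi(x+sv)-\Psi(x)}{s}
\]
exists and is finite. It is positively homogeneous and subadditive in $v$, and $p_x(v)+p_x(-v)\geq0$. The function $\Psi$ is G\^ateaux differentiable at $x$ exactly when $p_x$ is linear. First I will reduce linearity of $p_x$ to a countable family of conditions.

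By continuity of $\Psi$ at $x$, there are a continuous seminorm $q$ on $X$ and a constant $C$ with $\Psi\leq\Psi(x)+C$ on $\{x+v:q(v)<1\}$. Taking $s=1$ in the infimum gives $p_x(v)\leq C$ for $q(v)<1$. A sublinear functional that is bounded above on a neighbourhood of $0$ is continuous, so $p_x$ is continuous. Now fix a countable dense set $\{v_k\}\subset X$, which exists by separability, and suppose $p_x(v_k)+p_x(-v_k)=0$ for all $k$. The function $v\mapsto p_x(v)+p_x(-v)$ is continuous, nonnegative, and vanishes on a dense set, so it vanishes identically. Then subadditivity gives $p_x(u+v)\leq p_x(u)+p_x(v)$ and
\[
   -p_x(u+v)=p_x(-u-v)\leq -p_x(u)-p_x(v),
\]
so $p_x$ is additive. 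Together with positive homogeneity and $p_x(-v)=-p_x(v)$, this makes $p_x$ linear.

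Next I define, for $k,j\geq1$,
\[
   G_{k,j}:=\{x\in X:\ \Psi'(x;v_k)+\Psi'(x;-v_k)<1/j\}.
\]
By the previous paragraph, $\Psi$ is G\^ateaux differentiable on $\bigcap_{k,j}G_{k,j}$. Since $X$ is completely metrizable, the Baire category theorem shows this countable intersection is generic once each $G_{k,j}$ is open and dense.

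\emph{Openness.} For fixed $v$, the map $x\mapsto\Psi'(x;v)$ is an infimum over $s>0$ of the continuous functions $x\mapsto(\Psi(x+sv)-\Psi(x))/s$. Hence it is upper semicontinuous. A sum of two upper semicontinuous functions is upper semicontinuous, so the strict sublevel set $G_{k,j}$ is open.

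\emph{Density.} Given $x$, consider the convex function $\varphi(t):=\Psi(x+tv_k)$ on $\mathbb R$. A convex function of one variable is differentiable at all but countably many $t$. We have $\varphi'_+(t)=\Psi'(x+tv_k;v_k)$ and $\varphi'_-(t)=-\Psi'(x+tv_k;-v_k)$. Hence for all but countably many $t$,
\[
   \Psi'(x+tv_k;v_k)+\Psi'(x+tv_k;-v_k)=\varphi'_+(t)-\varphi'_-(t)=0,
\]
so $x+tv_k\in G_{k,j}$ for such $t$. Choosing such $t$ with $t\to0$ shows $x\in\overline{G_{k,j}}$.

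The only step I expect to require care is the passage from the countable set $\{v_k\}$ to all directions. This rests on the continuity of $p_x$, which comes from $\Psi$ being bounded above on a seminorm ball. In a Fr\'echet space this replaces the Lipschitz estimate available in the Banach case. In the intended application, $\Phi$ is in fact Lipschitz with respect to the continuous seminorm $\|\cdot\|_{0,g}$, so this step is immediate there.
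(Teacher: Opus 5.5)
Your argument is correct and complete. The paper does not prove this lemma; it quotes Sharp's theorem \cite[Theorem~2.1]{sharp1990differentiability} as a black box, whereas you write out the classical Mazur argument directly. The steps are as follows.
\begin{enumerate}[(i)]
\item You reduce linearity of the sublinear functional $p_x=\Psi'(x;\cdot)$ to the countably many conditions $p_x(v_k)+p_x(-v_k)=0$.
\item You get openness of $G_{k,j}$ from upper semicontinuity of $x\mapsto\Psi'(x;v)$, which is an infimum of continuous difference quotients.
\item You get density from differentiability of one-variable convex functions off a countable set.
\item You conclude by the Baire category theorem.
\end{enumerate}
You also correctly isolate the one place where the Banach-space proof needs adjusting: continuity of $p_x$. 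You obtain it from the bound $|p_x(v)|\le C\,q(v)$, which follows from boundedness of $\Psi$ on a seminorm ball, and this is exactly what lets vanishing on the dense set $\{v_k\}$ extend to all directions.

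The citation buys brevity. Your proof buys self-containedness and makes clear which hypotheses are actually used: completeness enters only through the Baire property. It also makes clear what the conclusion delivers, namely a two-sided directional derivative that is a continuous linear functional. That is precisely what the proof of Proposition~\ref{prop:sharp-exposure} uses when it writes $D\Phi(A)[B]$. As you note, in the application the continuity step is immediate: the bound $|\Phi(A)-\Phi(B)|\le\tfrac12 I_g(m)\|A-B\|_{0,g}$ gives $|p_A(B)|\le\tfrac12 I_g(m)\|B\|_{0,g}$ directly.
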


\begin{proposition}
\label{prop:sharp-exposure}
There is a generic subset
$\mathscr G_{g,m}\subset\mathscr X_g^\infty$ such that every
$A\in\mathscr G_{g,m}$ has the following property: for each
$E_A\in\mathcal K_A$,

\begin{equation}
   \mathcal K_A
   =
   \begin{cases}
      \{E_A\}, & 2m\neq\Vol_g(M),\\
      \{E_A,E_A^c\}, & 2m=\Vol_g(M).
   \end{cases}
   \label{eq:selected-minimizers}
\end{equation}
\end{proposition}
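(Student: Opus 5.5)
We take $\mathscr G_{g,m}$ to be the generic set of points at which $\Phi$ is G\^ateaux differentiable. This set exists by Lemma~\ref{lem:sharp-differentiability}, because $\mathscr X_g^\infty$ is a separable Fr\'echet space and $\Phi$ is convex and continuous, as shown above.

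Fix $A\in\mathscr G_{g,m}$ and any $E\in\mathcal K_A$. The first step is the standard subgradient argument. For $B\in\mathscr X_g^\infty$ and $\tau>0$, the definition of $\Phi$ gives $\Phi(A+\tau B)\geq\langle\mathsf T_E,A\rangle+\tau\langle\mathsf T_E,B\rangle=\Phi(A)+\tau\langle\mathsf T_E,B\rangle$. Hence the right derivative satisfies $D\Phi(A)[B]\geq\langle\mathsf T_E,B\rangle$. Applying the same inequality with $-B$, and using linearity of the G\^ateaux derivative, gives $D\Phi(A)[B]\le\langle\mathsf T_E,B\rangle$. Therefore $D\Phi(A)[B]=\langle\mathsf T_E,B\rangle$ for every $B\in\mathscr X_g^\infty$ and every $E\in\mathcal K_A$.

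Now take two elements $E,F\in\mathcal K_A$. The difference $\mathsf T_E-\mathsf T_F$ is a trace-free symmetric tensor-valued Radon measure, and it pairs to zero with every $B\in\mathscr X_g^\infty$. Lemma~\ref{lem:smooth-tensor-separation} then gives $\mathsf T_E=\mathsf T_F$. Proposition~\ref{prop:stress-rigidity} turns this into $F=E$ or $F=E^c$ modulo null sets. It follows that $\mathcal K_A\subset\{E_A,E_A^c\}$, and $E_A\in\mathcal K_A$ because $\mathcal K_A$ is nonempty.

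The final step separates the two cases.
\begin{itemize}
\item If $2m\neq\Vol_g(M)$, then $\Vol_g(E_A^c)=\Vol_g(M)-m\neq m$. So $E_A^c\notin\mathcal I(g,m)$, and $\mathcal K_A=\{E_A\}$.
\item If $2m=\Vol_g(M)$, then $E_A^c$ has volume $m$ and the same perimeter measure as $E_A$, so $E_A^c\in\mathcal I(g,m)$. Since $\mathsf T_{E_A^c}=\mathsf T_{E_A}$, it attains the same pairing with $A$, so $E_A^c\in\mathcal K_A$. This gives $\mathcal K_A=\{E_A,E_A^c\}$.
\end{itemize}
The argument holds for every choice of $E_A\in\mathcal K_A$, which is the statement as written.

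The only delicate point is the derivative identity. It needs G\^ateaux differentiability in the two-sided, linear sense, which Sharp's theorem provides. It also needs each $E\in\mathcal K_A$ to realize the maximum exactly at $A$, which holds by the definition of $\mathcal K_A$. Everything else reduces to the results already established.
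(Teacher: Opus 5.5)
Your proposal is correct and follows the paper's proof essentially step for step: you take the G\^ateaux-differentiability set from Sharp's theorem, use the subgradient argument with $\pm B$ to get $D\Phi(A)[B]=\langle\mathsf T_E,B\rangle$ for every $E\in\mathcal K_A$, and then use Lemma~\ref{lem:smooth-tensor-separation} and Proposition~\ref{prop:stress-rigidity} to reduce $\mathcal K_A$ to $\{E_A\}$ or $\{E_A,E_A^c\}$, depending on whether $E_A^c$ has volume $m$. One small point: $E_A\in\mathcal K_A$ holds by the choice of $E_A$ rather than ``because $\mathcal K_A$ is nonempty''; nonemptiness only ensures that the statement is not vacuous.
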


\begin{proof}
By Lemma~\ref{lem:sharp-differentiability}, there is a generic set
$\mathscr G_{g,m}\subset\mathscr X_g^\infty$
on which $\Phi$ is G\^ateaux differentiable.
Fix $A\in\mathscr G_{g,m}$.
If $E\in\mathcal K_A$, then, for every
$B\in\mathscr X_g^\infty$ and every $t>0$,

\[
   \Phi(A+tB)
   \geq
   \langle\mathsf T_E,A+tB\rangle
   =
   \Phi(A)+t\langle\mathsf T_E,B\rangle.
\]

Subtract $\Phi(A)$, divide by $t$, and let $t\downarrow0$.  This gives

\[
   D\Phi(A)[B]\geq\langle\mathsf T_E,B\rangle.
\]

Applying the same argument to $-B$ gives the reverse inequality.
Therefore,

\[
   D\Phi(A)[B]=\langle\mathsf T_E,B\rangle
   \qquad\text{for every }B\in\mathscr X_g^\infty.
\]

If $F\in\mathcal K_A$, its stress satisfies the same identity.
Both stresses are trace-free, so
Lemma~\ref{lem:smooth-tensor-separation} gives
$\mathsf T_E=\mathsf T_F$.  Proposition~\ref{prop:stress-rigidity}
then gives $F=E$ or $F=E^c$.  The second case is possible only when
$2m=\Vol_g(M)$.  At half volume, $E^c$ also belongs to
$\mathcal I(g,m)$ and has the same stress as $E$, so
$E^c\in\mathcal K_A$.  This proves
\eqref{eq:selected-minimizers}.
\end{proof}

Along the path \eqref{eq:metric-deformation}, every set keeps its
volume and its perimeter derivative at $t=0$ is $\ell_A(E)$.
We next show that all volume-$m$ minimizers for $g_t$ approach
$\mathcal K_A$ as $t\downarrow0$.

\begin{proposition}
\label{prop:first-order-selection}
Let $A\in\mathscr X_g^\infty$ and let $g_t$ be given by
\eqref{eq:metric-deformation}.  Then

\begin{equation}
   \sup_{E\in\mathcal I(g_t,m)}
   \dist_{d_0}(E,\mathcal K_A)
   \longrightarrow0
   \qquad\text{as }t\downarrow0.
   \label{eq:first-order-selection}
\end{equation}

Consequently,

\begin{equation}
   \limsup_{t\downarrow0}
   \diam_{d_0}\mathcal I(g_t,m)
   \leq
   \diam_{d_0}\mathcal K_A.
   \label{eq:diameter-selection}
\end{equation}
\end{proposition}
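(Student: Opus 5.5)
The plan is a contradiction argument built on the expansion of Lemma~\ref{lem:metric-expansion} and the compactness results of the Appendix. Suppose \eqref{eq:first-order-selection} fails. Then there are $\varepsilon>0$, times $t_i\downarrow0$, and sets $E_i\in\mathcal I(g_{t_i},m)$ with $\dist_{d_0}(E_i,\mathcal K_A)\geq\varepsilon$.

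The first step is to compare $E_i$ with a fixed $E_0\in\mathcal K_A$. Since $dV_{g_t}=dV_g$, the admissible class is the same for all $t$, so $E_0$ is a competitor for $g_{t_i}$. Using \eqref{eq:perimeter-expansion} and \eqref{eq:uniform-remainder} we get
\[
   P_{g_{t_i}}(E_i)\leq P_{g_{t_i}}(E_0)=I_g(m)+t_i\ell_A(E_0)+O(t_i^2).
\]
Hence $P_{g_{t_i}}(E_i)$ is bounded. Because $g_{t_i}$ is uniformly comparable to $g$ (the exact formula \eqref{eq:exact-perimeter-formula} gives $P_g\leq e^{|t|\|A\|/2}P_{g_t}$), $P_g(E_i)$ is bounded as well.

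Next we expand $P_{g_{t_i}}(E_i)=P_g(E_i)+t_i\ell_A(E_i)+O(t_i^2)$. Since $P_g(E_i)\geq I_g(m)$ and $|\ell_A(E_i)|\leq\frac12\|A\|P_g(E_i)$, this yields
\begin{equation*}
   0\leq P_g(E_i)-I_g(m)\leq t_i\bigl(\ell_A(E_0)-\ell_A(E_i)\bigr)+O(t_i^2)=O(t_i).
\end{equation*}
By BV compactness, after passing to a subsequence, $E_i\to E$ in $L^1$ with $\Vol_g(E)=m$. Lower semicontinuity together with $P_g(E_i)\to I_g(m)$ gives $E\in\mathcal I(g,m)$ and strict $BV_g$ convergence.

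The second step is to pass to the limit in the stresses. By Reshetnyak continuity, which I expect is the content of Proposition~\ref{prop:stress-continuity}, strict convergence implies $\langle\mathsf T_{E_i},A\rangle\to\langle\mathsf T_E,A\rangle$, that is, $\ell_A(E_i)\to\ell_A(E)$. Dividing the display above by $t_i$ gives $0\leq\ell_A(E_0)-\ell_A(E_i)+O(t_i)$. Letting $i\to\infty$ yields $\ell_A(E)\leq\ell_A(E_0)=\min_{\mathcal I(g,m)}\ell_A$, so $E\in\mathcal K_A$. But $d_0(E_i,E)\to0$, which contradicts $\dist_{d_0}(E_i,\mathcal K_A)\geq\varepsilon$.

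The main obstacle is this passage from $L^1$ convergence to convergence of the first-order term $\ell_A(E_i)$. This is exactly why we first extract $P_g(E_i)\to I_g(m)$, so that the convergence is strict, before dividing by $t_i$. The division is legitimate because the remainder bound \eqref{eq:uniform-remainder} is uniform in the set, with the perimeters uniformly bounded.

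Finally, the diameter bound \eqref{eq:diameter-selection} follows from the triangle inequality for $d_0$. For $E,F\in\mathcal I(g_t,m)$, choose $E',F'\in\mathcal K_A$ nearly attaining the distances; this is possible since $\mathcal K_A$ is compact. Then
\[
   d_0(E,F)\leq\diam_{d_0}\mathcal K_A+2\sup_{G\in\mathcal I(g_t,m)}\dist_{d_0}(G,\mathcal K_A).
\]
Take the supremum over $E,F$ and then the $\limsup$ as $t\downarrow0$.
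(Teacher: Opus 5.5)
Your proof is correct and follows the paper's argument. You compare $E_i$ with a fixed minimizer via \eqref{eq:perimeter-expansion}, drop the nonnegative deficit $P_g(E_i)-I_g(m)$, pass $\ell_A(E_i)\to\ell_A(E)$ by Reshetnyak continuity (Proposition~\ref{prop:stress-continuity}), and finish the diameter bound with the triangle inequality; the only difference is that you derive the limit $E\in\mathcal I(g,m)$ and the strict $BV_g$ convergence directly from the $O(t_i)$ deficit bound (using that $dV_{g_t}=dV_g$ fixes the admissible class), where the paper cites Proposition~\ref{prop:joint-compactness}.
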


\begin{proof}
Suppose \eqref{eq:first-order-selection} fails. Let $t_i\downarrow0$ and choose
$E_i\in\mathcal I(g_{t_i},m)$.  By
Proposition~\ref{prop:joint-compactness}, a subsequence converges in
$L^1$ to some $E\in\mathcal I(g,m)$, and the convergence is strict
in $BV_g$.  Fix $F\in\mathcal I(g,m)$.  Both $E_i$ and $F$ have
volume $m$ for $g$ and $g_{t_i}$.  Minimality of $E_i$ and the
expansion \eqref{eq:perimeter-expansion} give

\begin{equation}
   P_g(E_i)-I_g(m)+t_i\ell_A(E_i)+R_{t_i}(E_i)
   \leq
   t_i\ell_A(F)+R_{t_i}(F).
   \label{eq:selection-comparison}
\end{equation}

Since $\Vol_g(E_i)=m$, $P_g(E_i)-I_g(m)\ge 0$ and can be dropped. As $t_i$ passing to 0, by the
Strict $BV_g$ convergence and
Proposition~\ref{prop:stress-continuity}, we have
$\ell_A(E_i)\to\ell_A(E)$. Hence \eqref{eq:selection-comparison} gives  
$\ell_A(E)\leq\ell_A(F)$. As $F\in\mathcal I(g,m)$ was arbitrary, $E\in\mathcal K_A$.

Note that by the assumption, there are $\delta>0$, $t_i\downarrow0$, and
$E_i\in\mathcal I(g_{t_i},m)$ such that
$\dist_{d_0}(E_i,\mathcal K_A)\geq\delta$ for every $i$,
while the preceding argument gives a subsequence converging to an element of $\mathcal K_A$, a contradiction.

\medskip 

For the diameter estimate, take $E,F\in\mathcal I(g_t,m)$.
Compactness of $\mathcal K_A$ gives $E',F'\in\mathcal K_A$ with

\[
   d_0(E,E')=\dist_{d_0}(E,\mathcal K_A),
   \qquad
   d_0(F,F')=\dist_{d_0}(F,\mathcal K_A).
\]

The triangle inequality gives

\[
\begin{aligned}
   d_0(E,F)
   &\leq d_0(E,E')+d_0(E',F')+d_0(F',F)\\
   &\leq\diam_{d_0}\mathcal K_A
      +\dist_{d_0}(E,\mathcal K_A)
      +\dist_{d_0}(F,\mathcal K_A).
\end{aligned}
\]

Taking the supremum over $E,F\in\mathcal I(g_t,m)$ yields

\begin{equation}\label{eq:triangle-ineq}
    \diam_{d_0}\mathcal I(g_t,m)
   \leq
   \diam_{d_0}\mathcal K_A
   +2\sup_{E\in\mathcal I(g_t,m)}
      \dist_{d_0}(E,\mathcal K_A).
\end{equation}

Now \eqref{eq:first-order-selection} implies
\eqref{eq:diameter-selection}.
\end{proof}

We can now prove Proposition~\ref{prop:variational-selection}, which
provides the metric perturbations used in
Section~\ref{sec:main-proofs}.

\begin{proof}[Proof of Proposition~\ref{prop:variational-selection}]
Choose $A\in\mathscr G_{g,m}$.  By
Proposition~\ref{prop:first-order-selection},

\[
   \eta(t)
   :=
   \sup_{E\in\mathcal I(g_t,m)}
   \dist_{d_0}(E,\mathcal K_A)
   \longrightarrow0
   \qquad\text{as }t\downarrow0.
\]

If $2m\neq\Vol_g(M)$, Proposition~\ref{prop:sharp-exposure} gives
$\mathcal K_A=\{E_A\}$ for some $E_A\in\mathcal I(g,m)$.
The triangle inequality \eqref{eq:triangle-ineq} then gives
$\diam_{d_0}\mathcal I(g_t,m)\leq2\eta(t)\to0$.

If $2m=\Vol_g(M)$, then
$\mathcal K_A=\{E_A,E_A^c\}$ and $d_\pm(E_A,E_A^c)=0$.
The triangle inequality for $d_\pm$ therefore gives
$\diam_{d_\pm}\mathcal I(g_t,m)\leq2\eta(t)\to0$.

Since $g_t\to g$ smoothly and $dV_{g_t}=dV_g$, every sufficiently
small $t>0$ gives a metric in the prescribed neighborhood.
\end{proof}

\section{Baire-category proofs}
\label{sec:main-proofs}

We first prove Theorem~\ref{thm:normalized-volume} using
Proposition~\ref{prop:variational-selection}.  We then use constant
rescaling to prove Theorem~\ref{thm:generic-uniqueness}.
The space $\Met^\infty(M)$ is an open subset of the Fr\'echet space
$C^\infty(M;\operatorname{Sym}^2T^*M)$, so it is completely metrizable
in its smooth topology.  Since 
$\mathscr M_m^\infty(M)$ and $\mathscr P^\infty(M)$ are open subsets of
$\Met^\infty(M)$ and $\Met^\infty(M)\times\R$, respectively, all three
spaces are therefore Baire spaces.

\begin{proof}[Proof of Theorem~\ref{thm:normalized-volume}]
Fix $s\in(0,1)$. Set $d_\star=d_0$ if $s\neq\tfrac12$ and
$d_\star=d_\pm$ if $s=\tfrac12$.
For each integer $j\geq1$, define
\[
   \mathscr U_{s,j}
   :=
   \left\{
      g\in\Met^\infty(M):
      \diam_{d_\star}\mathcal I\bigl(g,s\Vol_g(M)\bigr)<\frac1j
   \right\}.
\]
We prove that each $\mathscr U_{s,j}$ is open and dense.

First, suppose that $\mathscr U_{s,j}$ is not open.
Then there are $g\in\mathscr U_{s,j}$ and metrics
$g_i\notin\mathscr U_{s,j}$ such that $g_i\to g$ smoothly.
Set
\[
   m_i:=s\Vol_{g_i}(M),
   \qquad
   m:=s\Vol_g(M).
\]
Then $m_i\to m$. Since each family $\mathcal I(g_i,m_i)$ is
compact in $L^1$, we can choose
$E_i,F_i\in\mathcal I(g_i,m_i)$ with
\[
   d_\star(E_i,F_i)
   =
   \diam_{d_\star}\mathcal I(g_i,m_i)
   \geq\frac1j.
\]
By Proposition~\ref{prop:joint-compactness}, a subsequence satisfies
$E_i\to E$ and $F_i\to F$ in $L^1$, where
$E,F\in\mathcal I(g,m)$.
Hence
\[
   \frac1j
   \leq d_\star(E,F)
   \leq \diam_{d_\star}\mathcal I(g,m)
   <\frac1j,
\]
a contradiction. Thus $\mathscr U_{s,j}$ is open.

To prove density, fix $g\in\Met^\infty(M)$ and a
$C^\infty$-neighborhood $\mathscr O$ of $g$.
Apply Proposition~\ref{prop:variational-selection} to
$(g,s\Vol_g(M))$ with $\delta=1/j$.
It gives a metric $g_t\in\mathscr O$ such that $dV_{g_t}=dV_g$ and
\[
   \diam_{d_\star}\mathcal I\bigl(g_t,s\Vol_g(M)\bigr)<\frac1j.
\]
Since $\Vol_{g_t}(M)=\Vol_g(M)$, we have
$g_t\in\mathscr U_{s,j}$. Thus $\mathscr U_{s,j}$ is dense.
By the Baire category theorem,
\[
   \mathscr R_s
   :=
   \bigcap_{j=1}^{\infty}\mathscr U_{s,j}
\]
is a dense $G_\delta$ set and hence is generic.
For every $g\in\mathscr R_s$,
\[
   \diam_{d_\star}\mathcal I\bigl(g,s\Vol_g(M)\bigr)=0.
\]
If $s\neq\tfrac12$, this means that the isoperimetric region
is unique.
If $s=\tfrac12$, choose $E\in\mathcal I(g,\Vol_g(M)/2)$.
Every other minimizer has zero $d_\pm$-distance from $E$,
so it equals either $E$ or $E^c$ modulo null sets. These are therefore exactly the two minimizers.
\end{proof}

\begin{proof}[Proof of Theorem~\ref{thm:generic-uniqueness}]
For part~\textup{(i)}, fix $m>0$. Let
\[
   \mathscr R_m
   :=
   \left\{
      g\in\mathscr M_m^\infty(M):
      \#\mathcal I(g,m)=1
   \right\}.
\]
By Theorem~\ref{thm:normalized-volume} and constant rescaling of
the metric, $\mathscr R_m$ is dense in $\mathscr M_m^\infty(M)$. To show $\mathscr R_m$ is a $G_\delta$ set, note that
\[
   \mathscr R_m
   =
   \bigcap_{j=1}^{\infty}
   \left\{
      g\in\mathscr M_m^\infty(M):
      \diam_{d_0}\mathcal I(g,m)<\frac1j
   \right\}.
\]
The same compactness argument used in the proof of
Theorem~\ref{thm:normalized-volume} 
shows that each set in this intersection is open.
Thus $\mathscr R_m$ is a dense $G_\delta$ set and hence is generic.
This proves part~\textup{(i)}.

\medskip

For part~\textup{(ii)}, let
\[
   \mathscr R
   :=
   \left\{
      (g,m)\in\mathscr P^\infty(M):
      \#\mathcal I(g,m)=1
   \right\}.
\]
By part~\textup{(i)}, every pair $(g,m)$ can be approximated
by pairs in $\mathscr R$ while keeping $m$ fixed.
Hence $\mathscr R$ is dense.
Moreover,
\[
   \mathscr R
   =
   \bigcap_{j=1}^{\infty}
   \left\{
      (g,m)\in\mathscr P^\infty(M):
      \diam_{d_0}\mathcal I(g,m)<\frac1j
   \right\}.
\]
Each set in this intersection is open by the same compactness
argument.
Therefore $\mathscr R$ is a dense $G_\delta$ set and hence is generic.
\end{proof}

\appendix
\section{Appendix}\label{sec:Appendix}

The following proposition was proved in
\cite[proof of Lemma~1]{marshallstevens2025generic}.
We include the proof for completeness.

\begin{proposition}[Compactness and continuity]\label{prop:joint-compactness}
Suppose $(g_i,m_i)\in\mathscr P^\infty(M)$ and

\begin{equation}
   g_i\longrightarrow g\quad\text{in }C^\infty,
   \qquad
   m_i\longrightarrow m,
   \qquad
   0<m<\Vol_g(M).
   \label{eq:varying-data}
\end{equation}

Then

\begin{equation}
   I_{g_i}(m_i)\longrightarrow I_g(m).
   \label{eq:profile-continuity}
\end{equation}

If $E_i\in\mathcal I(g_i,m_i)$, then a subsequence converges in $L^1$
to some $E\in\mathcal I(g,m)$. This convergence is also strict in $BV_g$.
\end{proposition}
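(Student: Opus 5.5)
The proof splits into an upper bound $\limsup_i I_{g_i}(m_i)\le I_g(m)$, obtained by a volume-fixing construction, and a lower bound together with compactness, obtained from the uniform equivalence of the metrics.

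\emph{Upper bound.} Since $g_i\to g$ in $C^\infty$, there are constants $\lambda_i\to1$ with $\lambda_i^{-1}g\le g_i\le\lambda_i g$. Hence
$$dV_{g_i}=\rho_i\,dV_g,\qquad \rho_i\to1 \text{ uniformly},$$
and for every finite-perimeter set $F$,
$$\lambda_i^{-n/2}P_g(F)\le P_{g_i}(F)\le \lambda_i^{n/2}P_g(F).$$

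Fix $F\in\mathcal I(g,m)$, which exists by the direct method. The set $F$ has $g_i$-volume $m+o(1)$, so it must be corrected to volume exactly $m_i$. I would use the standard volume-fixing variations, as in Almgren or Maggi's book.
\begin{itemize}
\item Since $0<m<\Vol_g(M)$, $F$ is neither null nor full, so $\mu_F\neq0$.
\item Choose a smooth vector field $X$ with $\int_{\partial^*F}g(X,\nu_F)\,d\mu_F\neq0$, and let $\Phi_\sigma$ be its flow.
\item The map $\sigma\mapsto\Vol_{g_i}(\Phi_\sigma(F))$ is $C^1$ near $0$, uniformly in $i$, with derivative bounded away from zero. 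This is because $dV_{g_i}\to dV_g$ smoothly.
\item The inverse function theorem then gives $\sigma_i\to0$ with $\Vol_{g_i}(\Phi_{\sigma_i}(F))=m_i$, and
$$P_{g_i}(\Phi_{\sigma_i}(F))\le(1+C|\sigma_i|)\lambda_i^{n/2}P_g(F).$$
\end{itemize}
Consequently $\limsup_i I_{g_i}(m_i)\le I_g(m)$.

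\emph{Lower bound and compactness.} Let $E_i\in\mathcal I(g_i,m_i)$. By the upper bound and the metric comparison, $P_g(E_i)\le\lambda_i^{n/2}I_{g_i}(m_i)$ is bounded. BV compactness on the closed manifold (Rellich in charts) gives a subsequence with $E_i\to E$ in $L^1$. Uniform convergence of the densities gives $\Vol_g(E)=\lim\Vol_{g_i}(E_i)=m$. Lower semicontinuity of perimeter yields
$$I_g(m)\le P_g(E)\le\liminf_i P_g(E_i)\le\liminf_i\lambda_i^{n/2}I_{g_i}(m_i)\le\limsup_i I_{g_i}(m_i)\le I_g(m).$$
All of these are therefore equalities. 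This proves \eqref{eq:profile-continuity} and shows $E\in\mathcal I(g,m)$. It also shows $P_g(E_i)\to P_g(E)$: the $\liminf$ equals $P_g(E)$, and the $\limsup$ of $P_g(E_i)\le\lambda_i^{n/2}I_{g_i}(m_i)$ is at most $I_g(m)=P_g(E)$. Hence the convergence is strict in $BV_g$.

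\emph{Main obstacle.} The delicate step is the volume-fixing perturbation. It needs uniform control, over $i$, of the derivative of the volume map and of the perimeter distortion under the flow. I would first try a single fixed vector field $X$ chosen for $F$ and metric $g$, since the metrics converge smoothly and the relevant constants depend continuously on $g_i$.
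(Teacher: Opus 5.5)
Your proposal is correct and follows essentially the same route as the paper. The paper likewise gets $\limsup_i I_{g_i}(m_i)\le I_g(m)$ from a volume-fixing variation, using the volume-fixing lemma from Maggi's book inside a fixed coordinate ball, which is the same device as your single flow field $X$; it then uses BV compactness, lower semicontinuity and $P_g(E_i)=P_{g_i}(E_i)+o(1)$ for the rest. One small addition: as the paper remarks, to get $I_{g_i}(m_i)\to I_g(m)$ along the full sequence you must run your compactness chain starting from an arbitrary subsequence (or from one realizing $\liminf_i I_{g_i}(m_i)$), since the chain as written only controls the extracted subsequence.
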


\begin{proof}
Fix $F\in\mathcal I(g,m)$. Since $M$ is connected and
$0<m<\Vol_g(M)$, we have $P_g(F)>0$.
Choose a coordinate ball $B$ with $P_g(F;B)>0$.
We adjust the volume of $F$ inside $B$ using
\cite[Lemma~17.21]{maggi2012sets}.
This gives finite-perimeter sets $F_i$ such that

\[
   \Vol_{g_i}(F_i)=m_i,
   \qquad
   |P_{g_i}(F_i)-P_{g_i}(F)|
   \leq C|m_i-\Vol_{g_i}(F)|.
\]
The same constant $C>0$ works for all sufficiently large $i$, because
$B$ is fixed and $g_i\to g$ smoothly.
Also, $\Vol_{g_i}(F)\to m$ and $P_{g_i}(F)\to P_g(F)$.
Since $m_i\to m$, the perimeter estimate gives

\begin{equation}
   \limsup_{i\to\infty}I_{g_i}(m_i)
   \leq P_g(F)=I_g(m).
   \label{eq:joint-compactness-limsup}
\end{equation}

Now choose any $E_i\in\mathcal I(g_i,m_i)$.
The bound \eqref{eq:joint-compactness-limsup} gives a uniform bound
for $P_{g_i}(E_i)$. Since $g_i\to g$ smoothly, the perimeters
$P_g(E_i)$ are also uniformly bounded.
Compactness for finite-perimeter sets gives a subsequence with
$E_i\to E$ in $L^1$; see \cite[Corollary~12.27]{maggi2012sets}.
The smooth convergence of the metrics and the volume constraints
$\Vol_{g_i}(E_i)=m_i\to m$ imply that $\Vol_g(E)=m$.
Lower semicontinuity of perimeter now gives

\[
   I_g(m)
   \leq P_g(E)
   \leq \liminf_{i\to\infty}P_g(E_i)
   =\liminf_{i\to\infty}I_{g_i}(m_i)
   \leq I_g(m).
\]
This shows that $E\in\mathcal I(g,m)$.
The same argument applies to every subsequence.
Together with \eqref{eq:joint-compactness-limsup}, it proves that the
profile values converge along the full sequence, as stated in
\eqref{eq:profile-continuity}.
The smooth convergence of the metrics gives
$P_g(E_i)=P_{g_i}(E_i)+o(1)$.
Thus $P_g(E_i)\to P_g(E)$, so $E_i\to E$ strictly in $BV_g$.
\end{proof}

For a fixed pair $(g,m)$, the direct method gives an isoperimetric region.
The preceding proposition shows that $\mathcal I(g,m)$ is compact in
$L^1$. If $E_i\in\mathcal I(g,m)$ and $E_i\to E$ in $L^1$, then
$\Vol_g(E)=m$ and
\[
   I_g(m)\leq P_g(E)\leq\liminf_iP_g(E_i)=I_g(m).
\]
Thus $E\in\mathcal I(g,m)$ and $P_g(E_i)\to P_g(E)$.
The next proposition shows that strict $BV_g$ convergence also gives
weak-star convergence of the tensor measures $\mathsf T_{E_i}$.

\begin{proposition}[Continuity under strict convergence]\label{prop:stress-continuity}
If $E_i\to E$ strictly in $BV_g$ in the sense of \eqref{eq:strict-bv}, then
$\mathsf T_{E_i}\stackrel{*}{\rightharpoonup}\mathsf T_E$ as
tensor-valued Radon measures.
\end{proposition}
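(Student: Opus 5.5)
The plan is to reduce the statement to Reshetnyak's continuity theorem for vector-valued measures under strict convergence. The tensor measure $\mathsf T_E$ is the push-forward of a continuous function of the pair (point, normal direction) against the perimeter measure. We localize to coordinate charts, where perimeter measures become total variations of $\R^{n+1}$-valued Radon measures.

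Fix a finite atlas and a subordinate smooth partition of unity $\{\rho_k\}$. In a chart, write the $g$-Gauss--Green measure $D_g\chi_E=\nu_E\mu_E$ as a covector-valued measure; in coordinates this is $D\chi_E$, the Euclidean distributional gradient (up to the smooth density of $dV_g$). Then $\mu_E=|D\chi_E|_g$ is the total variation with respect to the smoothly varying norm $|\cdot|_{g^{-1}(x)}$ on covectors.

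\emph{Step 1.} From $E_i\to E$ in $L^1$ we get $D\chi_{E_i}\stackrel{*}{\rightharpoonup}D\chi_E$, by integrating against smooth compactly supported vector fields.

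\emph{Step 2.} The hypothesis $P_g(E_i)\to P_g(E)$ gives strict convergence of the total variation measures. This is where the norm varies with $x$. Reshetnyak's continuity theorem (e.g.\ \cite[Theorem~2.39]{ambrosio2000functions}) holds for integrands $f(x,\xi/|\xi|)$ continuous on $M\times$ sphere bundle. Its proof, via lower semicontinuity applied to $\pm f$ plus $C$ times mass, works equally for a continuous family of norms. I would verify this by applying the Euclidean version after the substitution $\xi\mapsto g^{-1/2}(x)\xi$, which is a continuous, linear-in-$\xi$ change. Equivalently, I would invoke the Riemannian form: $\mu_{E_i}\stackrel{*}{\rightharpoonup}\mu_E$ and the measures $\nu_{E_i}\mu_{E_i}$ converge as measures on the unit sphere bundle. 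First, lower semicontinuity on open sets together with convergence of total mass gives $\mu_{E_i}\stackrel{*}{\rightharpoonup}\mu_E$. Then Reshetnyak gives
\[
\int \phi(x,\nu_{E_i})\,d\mu_{E_i}\to\int\phi(x,\nu_E)\,d\mu_E
\]
for every continuous $\phi$ on the unit sphere bundle.

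\emph{Step 3.} Apply Step 2 with $\phi(x,\nu)=\tfrac12\langle \nu^\flat\otimes\nu^\flat-\tfrac1{n+1}g, S(x)\rangle_g$ for an arbitrary continuous symmetric tensor field $S$. This $\phi$ is even in $\nu$ and continuous. The result is $\langle\mathsf T_{E_i},S\rangle\to\langle\mathsf T_E,S\rangle$, which is weak-star convergence. The cutoff $\rho_k$ is absorbed into $S$, and summing over $k$ globalizes.

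The main obstacle is Step 2, namely justifying Reshetnyak's theorem with the $x$-dependent norm $|\cdot|_g$ rather than a Euclidean one. The concrete fix is to work with the frame $g^{-1/2}$ so that the covector measure has Euclidean total variation exactly equal to $\mu_E$ times the volume density. One then checks that strict convergence transfers, since the density is a fixed continuous positive function.
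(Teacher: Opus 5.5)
Your proposal is correct and takes essentially the paper's route: both apply Reshetnyak's continuity theorem to the even, continuous integrand $\tfrac12\bigl(S(\nu,\nu)-\tr_g S/(n+1)\bigr)$ on the $g$-unit sphere bundle, which gives $\langle\mathsf T_{E_i},S\rangle\to\langle\mathsf T_E,S\rangle$. Your orthonormal-frame localization spells out the Riemannian version of Reshetnyak that the paper cites without comment; chart by chart, use cutoffs compactly supported in each chart together with the global convergence $\mu_{E_i}\stackrel{*}{\rightharpoonup}\mu_E$ you derived, since strict convergence need not hold on each individual chart.
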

\begin{proof}
Strict $BV_g$ convergence gives
$D_g\chi_{E_i}\stackrel{*}{\rightharpoonup}D_g\chi_E$ and
$|D_g\chi_{E_i}|_g(M)\to|D_g\chi_E|_g(M)$.
Fix a continuous $g$-self-adjoint endomorphism field $B$ and define

\[
   f_B:
   \{(x,p)\in TM:|p|_g=1\}
   \longrightarrow\R,
   \qquad
   f_B(x,p)
   :=
   \frac12\left(
      g_x(B_xp,p)-\frac{\tr(B_x)}{n+1}
   \right).
\]

The function $f_B$ is continuous and bounded on the $g$-unit tangent bundle.
Reshetnyak's continuity theorem
\cite[Theorem~2.39]{ambrosio2000functions} therefore gives
\[
   \langle\mathsf T_{E_i},B\rangle
   \longrightarrow
   \langle\mathsf T_E,B\rangle.
\]
Since this holds for every such $B$,
$\mathsf T_{E_i}\stackrel{*}{\rightharpoonup}\mathsf T_E$.
\end{proof}

\paragraph{Disclosure of AI tools}

The author began developing the original strategy for this work in
early 2023. AI-assisted literature searches led the author to Ebin's
work \cite{ebin1970manifold} on volume-preserving perturbations of
Riemannian metrics. These perturbations provided a key tool for
implementing the strategy. AI tools also helped with the functional
analysis used in the proofs. In particular, they suggested Sharp's
differentiability theorem \cite{sharp1990differentiability}.
They also helped revise and check the proofs and improve the grammar
and exposition.

The author independently verified all mathematical arguments and
calculations and takes full responsibility for the results and their
presentation.

{\small
\bibliographystyle{alpha}
\bibliography{main}
}

\bigskip

\begingroup
\small
\normalfont
\noindent
Department of Mathematics, University of Rochester\\
Rochester, New York\\
Email address:
\href{mailto:gniu3@ur.rochester.edu}
     {\texttt{gniu3@ur.rochester.edu}}
\par
\endgroup
\end{document}